\numberwithin{equation}{section}
\newtheorem{Def}{Definition}[section]
\newtheorem{thm}[Def]{Theorem}
\newtheorem{cor}[Def]{Corollary}
\theoremstyle{remark}
\newtheorem{rmk}[Def]{Remark}
\begin{document}
 \title[ On Coefficient problem for  bi-univalent analytic functions]{On Coefficient problem for bi-univalent analytic functions\boldmath}
\author[N. Bohra]{Nisha Bohra}

\address{Department of Mathematics, University of Delhi,
Delhi--110 007, India}
\email{nishib89@gmail.com}

\author[V. Ravichandran]{V. Ravichandran}

\address{Department of Mathematics, University of Delhi,
Delhi--110 007, India}

\email{vravi@maths.du.ac.in, vravi68@gmail.com}

\begin{abstract}
 Estimates for initial coefficients  of Taylor-Maclaurin series of  bi-univalent functions belonging to certain classes defined by subordination are obtained. Our estimates improve upon the earlier known estimates for second and third coefficient. The bound for the fourth coefficient is new. In addition, bound for the fifth coefficient is obtained for  bi-starlike and strongly bi-starlike functions of order $\rho$ and $\beta$ respectively.
\end{abstract}

\keywords{univalent functions, bi- univalent functions, bi-starlike functions, strongly bi-starlike functions, coefficient bounds, subordination.}

\subjclass[2010]{30C45, 30C50, 30C80}

 \maketitle
\section{Introduction}
Let $\mathcal{A}$ be the class of analytic functions $f$ in the open unit disk $\mathbb{D}=\lbrace z\in \mathbb{C} : \vert z\vert <1\rbrace$ and normalized by the conditions $f(0)=0$ and $f'(0)=1$. If $f\in \mathcal{A}$, then
\begin{equation}\label{aa}
 f(z)=z+\sum_{n=2}^{\infty}a_n z^n  \quad (z\in\mathbb{D}).
 \end{equation}
The Koebe one-quarter theorem assures that the image of unit disk  $\mathbb{D}$ under every univalent function $f\in \mathcal{A}$ contains a disk of radius 1/4. Thus every univalent function $f$ has an inverse $f^{-1}$ satisfying $f^{-1}(f(z))=z$ $(z\in \mathbb{D})$ and
\[f(f^{-1}(w))=w  \quad (\vert w\vert < r_{0}(f), r_{0}(f)\geq 1/4).\]
  Furthermore, the Tayor-Maclaurin series of $f^{-1}$ is given by
  \begin{equation}\label{29}
  \begin{split}
  f^{-1}(w)=& w-a_2w^2+(2a_2^2-a_3)w^3-(5a_2^3-5a_2a_3+a_4)w^4\\&+(14a_2^4-21a_2^2a_3+3a_3^2+6a_2a_4-a_5)w^5+\cdots.
  \end{split}
  \end{equation}
  A function $f\in \mathcal{A}$ is said to be bi-univalent in $\mathbb{D}$ if  $f$ is univalent and $f^{-1}$ has univalent analytic continuation, which we denote by $g$, to the unit disk   $\mathbb{D}$. Let $\sigma$ denote the class of bi-univalent functions defined in the unit disk $\mathbb{D}$. Coefficient problem for bi-univalent functions were recently investigated by several authors \cite{bulut,frasin1,frasin,sim,siva,tang,xu,zap}.  An analytic function $f$ is subordinate to an analytic function $g$, written as $f(z)\prec g(z)$, provided there is an analytic function $w$ defined on $\mathbb{D}$ with $w(0)=0$ and $\vert w(z)\vert <1$ satisfying $f(z)=g(w(z))$. Ma and Minda unified various subclasses of starlike  and convex functions for which either of the quantity $zf'(z)/f(z)$ or $1+zf''(z)/f'(z)$ is subordinate to a more general superordinate function. For this purpose, they considered an analytic function $\varphi$ with positive real part in the unit disk $\mathbb{D}$ and normalized by  $\varphi(0)=1$ and $\varphi'(0)>0$. The class of Ma-Minda starlike functions consists of functions $f\in \mathcal{A}$ satisfying the subordination $ zf'(z)/f(z)\prec \varphi(z) $. Similarly, the class of Ma-Minda  convex functions  consists of functions $f\in \mathcal{A} $ satisfying the subordination $1+zf''(z)/f'(z)\prec \varphi(z)$. A function $f$ is bi-starlike of Ma-Minda type or bi-convex of Ma-Minda type if both $f$ and $g$  are respectively Ma-Minda starlike or convex.
 The classes consisting of  bi-starlike of Ma-Minda type or bi-convex of Ma-Minda type functions  are denoted by $ST_{\sigma}(\varphi)$ and $CV_{\sigma}(\varphi)$ respectively.

 In this paper, we consider  more general classes $ST_{\sigma} ^{\lambda}(\varphi)$  and $M_{\sigma}^{\lambda}(\varphi)$ for  $\lambda\geq 0$ which were investigated by Ali\emph{ et al.} in \cite{ali}  wherein they obtained the  bounds for $a_2$ and $a_3$. This motivated us to improve the bounds for $a_2$ and $a_3$. We also find the bound for  $a_4$. Earlier for $ 0\leq \rho<1$ and $0<\beta\leq 1$, Brannan and Taha \cite{bt} introduced two interesting subclasses $ST_{\sigma}(\rho)\equiv ST_{\sigma}((1+(1-2\rho)z)/(1-z))$  and $SS_{\sigma}(\beta)\equiv ST_{\sigma}(((1+z)/(1-z))^{\beta})$  of the  class $\sigma$, in analogy to the subclasses of  starlike functions of order $\rho$ and strongly starlike functions of order $\beta$ of the class $\mathcal{A}$ respectively. They found estimates for the second and third Taylor-Maclaurin coefficients  of the functions $f$  in these classes. Recently Mishra and Soren \cite{ak} found estimates for the fourth Taylor-Maclaurin coefficients  of the functions $f$  in these classes. This motivated us to find the bound for the fifth coefficient.

\section{Coefficient estimates}
Throughout this paper $\varphi$ denotes an analytic univalent function in $\mathbb{D}$ with positive real part and normalized by  $\varphi(0)=1$ and $\varphi'(0)>0$. Such a function has series expansion of the form
\begin{equation}\label{ee}
 \varphi(z)=1+B_1z+B_2z^2+B_3z^3+\cdots \quad (B_1>0).
\end{equation}
\begin{Def}\label{75}
For $\lambda\geq 0$,  the class $ST ^{\lambda}(\varphi)$ consists of functions $f\in \mathcal{A}$ satisfying
\[ \frac{zf'(z)}{ f(z)}+\lambda\frac {z^2 f''(z)}{f(z)}\prec \varphi(z) \quad (z\in\mathbb{D}).\] The class $ST_{\sigma} ^{\lambda}(\varphi)$ consists of functions $f\in \sigma$ such that $f$ and $g\in ST^{\lambda}(\varphi)$ where $g$ is the analytic continuation of $f^{-1}$ to the unit disk $\mathbb{D}$.
 \end{Def}
 Note that $ST^{0}_{\sigma}(\varphi) \equiv ST_{\sigma}(\varphi)$ is the class of Ma-Minda bi-starlike functions.
The class  $ST_{\sigma}(\varphi)$ includes many earlier classes, which are mentioned below:
\begin{enumerate}
\item  For $-1\leq B<A\leq 1$, $ST_{\sigma}((1+Az)/(1+Bz))\equiv ST_{\sigma}[A,B]$ and, for $0\leq \rho <1$,  $ST_{\sigma}[1-2\rho, 1]\equiv ST_{\sigma}(\rho)$ is the class of bi-starlike functions of order $\rho$ introduced and studied in \cite{bt}.
\item For $0<\beta\leq 1$, $ST_{\sigma}(((1+z)/(1-z))^{\beta})\equiv SS_{\sigma}(\beta)$ $(0<\beta\leq 1)$ is the class of strongly bi- starlike functions of order $\beta$ introduced and studied in \cite{bt}.
\end{enumerate}

\begin{thm}\label{vv}
Let the function $f$ given by (\ref{aa})  be in the class  $ST_{\sigma} ^{\lambda}(\phi)$ for  $\lambda\geq 0 $.
\begin{itemize}
\item[(a)]If $(1+2\lambda)^2B_1\leq \ \vert  (1+4\lambda)B_1^2+(B_1-B_2)(1+2\lambda)^2\vert$, then\\
\text{the  coefficients $a_2$, $a_3$ and $a_4$ satisfies }
\begin{align*}
\vert a_2\vert &\leq
\dfrac{B_1\sqrt{B_1}}{\sqrt{\vert(1+4\lambda)B_1^2+(B_1-B_2)(1+2\lambda)^2\vert}},\\
\vert a_3\vert  &\leq  \min \bigg\lbrace \dfrac{B_1}{ 4(1+3\lambda)\vert(1+4\lambda)B_1^2+(B_1-B_2)(1+2\lambda)^2\vert}\\&\quad \bigg(\vert (3+10\lambda)B_1^2+(B_1-B_2)(1+2\lambda)^2\vert+\vert(1+2\lambda)B_1^2-(1+2\lambda)^2(B_1-B_2)\vert\bigg),\\&\quad \dfrac{B_1}{2(1+3\lambda)\vert(1+4\lambda)B_1^2+(B_1-B_2)(1+2\lambda)^2)\vert}\\&\quad
 \bigg(\vert(1+2\lambda)B_1^2-(1+2\lambda)^2(B_1-B_2)\vert+\vert(1+4\lambda)B_1^2+(B_1-B_2)
 (1+2\lambda)^2\vert\bigg)\bigg\rbrace, \\
 \vert a_4\vert &\leq \min \bigg\lbrace
\dfrac{B_1}{3(1+4\lambda)}+\dfrac {2\sqrt{B_1}(1+2\lambda)}{3(1+4\lambda)\sqrt{\vert(1+4\lambda)B_1^2+
(B_1-B_2)(1+2\lambda)^2\vert}}(\vert A\vert+\vert C\vert),\\&\quad\dfrac{2 B_1}{ 6(1+4\lambda)\vert
 (9+44\lambda)B_1^2 -8(1+2\lambda)(1+3\lambda)(B_2 -B_1)\vert}\bigg(\vert
 (12+52\lambda)B_1^2\\&\quad -4(1+2\lambda)(1+3\lambda)(B_2-B_1)\vert + \vert(3+8\lambda)B_1^2+4(1+2\lambda)
 (1+3\lambda)(B_2-B_1)\vert\bigg) \\&\quad +\dfrac{2(1+2\lambda)\sqrt{B_1}}
{3(1+4\lambda)\sqrt{\vert(1+4\lambda)B_1^2+(B_1-B_2)(1+2\lambda)^2\vert}}\\&\quad\bigg\vert (B_2-B_1)+
\dfrac{B_1(2(1+3\lambda)B_1^3+(1+2\lambda)^3(B_1+B_3-2B_2))}{2(1+2\lambda)((1+4\lambda)B_1^2+(B_1-B_2)
(1+2\lambda)^2)}\bigg\vert\bigg \}.
 \end{align*}

\item[(b)] If $(1+2\lambda)^2B_1\geq \ \vert  (1+4\lambda)B_1^2+(B_1-B_2)(1+2\lambda)^2\vert$, then\\
\text{the  coefficients $a_2$, $a_3$ and $a_4$ satisfies }
\begin{align*}
\vert a_2\vert &\leq
\dfrac{B_1}{1+2\lambda},\\
\vert a_3\vert& \leq  \min \bigg\lbrace \dfrac{B_1}{ 4(1+3\lambda)\vert(1+4\lambda)B_1^2+(B_1-B_2)(1+2\lambda)^2\vert}\\&\quad\bigg(\vert (3+10\lambda)B_1^2+(B_1-B_2)(1+2\lambda)^2\vert+\vert(1+2\lambda)B_1^2-(1+2\lambda)^2(B_1-B_2)\vert\bigg),\\& \quad\dfrac{1}{2(1+3\lambda)(1+2\lambda)}\big(\vert B_1^2-(1+2\lambda)(B_1-B_2)\vert+ (1+2\lambda)B_1\big)\bigg\rbrace,\\
\vert a_4\vert& \leq \min \bigg\lbrace\dfrac{B_1}{3(1+4\lambda)}+\dfrac{2}{3(1+4\lambda)}(\vert A\vert +\vert C\vert),\\&\quad \dfrac{2 B_1}{6(1+44\lambda)\vert (9+44\lambda)B_1^2-8(1+2\lambda)(1+3\lambda)(B_2-B_1)\vert}\\&\quad\bigg(\vert (12+52\lambda)B_1^2 -4(1+2\lambda)(1+3\lambda)(B_2-B_1)\vert\\&\quad +\vert(3+8\lambda)B_1^2+4(1+2\lambda)(1+3\lambda)(B_2-B_1)\vert\bigg)\\&\quad +\frac{2}{3(1+4\lambda)}\bigg\vert (B_2-B_1) +\dfrac{B_1(2(1+3\lambda)B_1^3+(1+2\lambda)^3(B_1+B_3-2B_2))}{2(1+2\lambda)((1+4\lambda)B_1^2+(B_1-B_2)(1+2\lambda)^2)}\bigg\vert \bigg\}.
\end{align*}
where
\begin{equation*}
\begin{split}
A=&(B_2-B_1)+\frac{(3+8\lambda)B_1^2}{8(1+2\lambda)(1+3\lambda)}+\frac{B_1(1+2\lambda)^2}{4((1+4\lambda)B_1^2+(B_1-B_2)(1+2\lambda)^2)}\\&\quad\bigg(\frac{2(1+3\lambda)B_1^3}{(1+2\lambda)^3}+(B_1+B_3-2B_2)\bigg)
\end{split}
\end{equation*}
and
\begin{equation*}
\begin{split}
C=&-\frac{(3+8\lambda)B_1^2}{8(1+2\lambda)(1+3\lambda)}+\frac{B_1(1+2\lambda)^2}{4((1+4\lambda)B_1^2+(B_1-B_2)(1+2\lambda)^2)}\\&\quad\bigg(\frac{2(1+3\lambda)B_1^3}{(1+2\lambda)^3}+(B_1+B_3-2B_2)\bigg).
\end{split}
\end{equation*}
\end{itemize}
\end{thm}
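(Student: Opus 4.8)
The plan is to run the standard Ma--Minda scheme for bi-univalent classes: pull a pair of Schwarz functions out of the two subordinations, equate Taylor coefficients to obtain a system of identities, and then extract the sharpened estimates by systematically peeling the coefficient $B_1$ off the higher $B_j$-contributions so that the Schwarz data always enters through combinations of modulus at most one.

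First I would set up the equations. Since $f\in ST_\sigma^\lambda(\varphi)$, both $f$ and its continued inverse $g$ lie in $ST^\lambda(\varphi)$, so there are Schwarz functions $u(z)=c_1z+c_2z^2+c_3z^3+\cdots$ and $v(w)=d_1w+d_2w^2+d_3w^3+\cdots$ (analytic on $\mathbb{D}$, vanishing at $0$, of modulus $<1$) with
\[\frac{zf'(z)}{f(z)}+\lambda\frac{z^2f''(z)}{f(z)}=\varphi(u(z)),\qquad \frac{wg'(w)}{g(w)}+\lambda\frac{w^2g''(w)}{g(w)}=\varphi(v(w)).\]
A short computation gives the left side for $f$ as
\[1+(1+2\lambda)a_2z+\bigl(2(1+3\lambda)a_3-(1+2\lambda)a_2^2\bigr)z^2+\bigl(3(1+4\lambda)a_4-(3+8\lambda)a_2a_3+(1+2\lambda)a_2^3\bigr)z^3+\cdots,\]
and the version for $g$ follows by replacing $a_2,a_3,a_4$ with the inverse coefficients $-a_2$, $2a_2^2-a_3$, $-(5a_2^3-5a_2a_3+a_4)$ read off from \eqref{29}. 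Using \eqref{ee}, $\varphi(u(z))=1+B_1c_1z+(B_1c_2+B_2c_1^2)z^2+(B_1c_3+2B_2c_1c_2+B_3c_1^3)z^3+\cdots$, and similarly for $\varphi(v(w))$. Matching coefficients of $z,z^2,z^3$ and of $w,w^2,w^3$ yields six relations. The degree-one pair gives $(1+2\lambda)a_2=B_1c_1=-B_1d_1$, so $c_1=-d_1$ and $c_1^2=d_1^2$. Adding the degree-two pair eliminates $a_3$: $2(1+4\lambda)a_2^2=B_1(c_2+d_2)+2B_2c_1^2$; subtracting it gives $4(1+3\lambda)(a_3-a_2^2)=B_1(c_2-d_2)$. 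Subtracting the degree-three pair isolates $a_4$ (using $d_1=-c_1$ to simplify $c_1c_2-d_1d_2$ and $c_1^3-d_1^3$), producing $6(1+4\lambda)a_4$ in terms of $a_2^3$, $a_2a_3$, and $c_1(c_2+d_2)$, $c_3-d_3$, $c_1^3$.

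Next I would derive the estimates, always using the classical Schwarz bounds $|c_1|\le1$ and $|c_2|\le1-|c_1|^2$, so in particular $|c_2+c_1^2|\le1$, and likewise for the $d$'s. For $a_2$ one bound is the immediate $|a_2|\le B_1/(1+2\lambda)$ from the degree-one relation; for the other, write $2B_2c_1^2=2B_1c_1^2+2(B_2-B_1)c_1^2$ in the summed degree-two relation to obtain $2(1+4\lambda)a_2^2-2(B_2-B_1)c_1^2=B_1(c_2+c_1^2)+B_1(d_2+d_1^2)$, take moduli, and substitute $c_1^2=(1+2\lambda)^2a_2^2/B_1^2$; this yields $|a_2|^2\,\bigl|(1+4\lambda)B_1^2+(B_1-B_2)(1+2\lambda)^2\bigr|\le B_1^3$. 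The hypothesis separating (a) from (b) is precisely the condition that decides which of these two bounds is the smaller, so the stated $|a_2|$ estimate is the better of the two. For $a_3$, substitute $a_2^2$ into $a_3=a_2^2+B_1(c_2-d_2)/\bigl(4(1+3\lambda)\bigr)$, rewrite the right-hand side in terms of $P:=c_2+c_1^2$ and $Q:=d_2+c_1^2$ (using $c_2-d_2=P-Q$ and $a_2^2=B_1^3(P+Q)/(2D)$ with $D:=(1+4\lambda)B_1^2+(B_1-B_2)(1+2\lambda)^2$), and apply $|P|\le1$, $|Q|\le1$; the coefficients of $P$ and $Q$ simplify to exactly the two bracketed expressions of the first estimate, while the cruder route ($|c_2-d_2|\le2$ together with the $|a_2|$-bound) gives the second term of the $\min$. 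For $a_4$, substitute the expressions for $a_3$, $a_2^2$ and $a_2=B_1c_1/(1+2\lambda)$ into the degree-three relation and pull out the common factor $a_2$; the only summand without that factor is $B_1(c_3-d_3)/\bigl(6(1+4\lambda)\bigr)$, of modulus $\le B_1/\bigl(3(1+4\lambda)\bigr)$, and the remaining term is $|a_2|/\bigl(6(1+4\lambda)\bigr)$ times a quantity whose regrouping around $c_2+c_1^2$ and $d_2+c_1^2$ produces the coefficients $A$ and $C$ (it is here that the combinations $B_1-B_2$ and $B_1+B_3-2B_2$ enter). Bounding $|a_2|$ by the estimate already established gives the first bound; a second bound is obtained by instead adding the degree-three pair to eliminate $a_4$ and using the resulting identity for $a_2^3$, which is what brings in the denominator $(9+44\lambda)B_1^2-8(1+2\lambda)(1+3\lambda)(B_2-B_1)$.

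The hard part will be the $a_4$ estimate: after three substitutions one is left with a long polynomial in $c_1,c_2,c_3,d_2,d_3$, and the delicate point is to regroup it so that the free Schwarz data are only the unit-modulus quantities $c_2+c_1^2$, $d_2+c_1^2$, $c_3-d_3$ — it is exactly this regrouping that forces the precise forms of $A$ and $C$ and that has to be carried out twice, once for each bound in the $\min$. Everything else is bookkeeping with the triangle inequality.
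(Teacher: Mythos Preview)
Your setup is fine and equivalent to the paper's (they pass to Carath\'eodory functions $p,q$ with $|p_i|,|q_i|\le2$, but your Schwarz quantities $P=c_2+c_1^2=p_2/2$ and $Q=d_2+c_1^2=q_2/2$ are the same data). The $|a_2|$ estimate and the \emph{first} $|a_3|$ bound go through exactly as you describe. However, two of your routes do not reproduce the bounds in the statement.

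For the second $|a_3|$ bound, the paper does \emph{not} use the crude estimate $|a_3|\le|a_2|^2+B_1/(2(1+3\lambda))$; that would give, e.g.\ for $\lambda=0$, $B_1=B_2$, the value $3B_1/2$ instead of the stated $B_1$. Instead they regroup $4(1+3\lambda)a_3$ as a combination of $(p_2+q_2)$ and $p_2$ alone, and then apply a \emph{refined} bound on $|p_2+q_2|$ coming from the summed degree-two relation. More importantly, your first $|a_4|$ bound has a genuine gap: using only the \emph{subtracted} degree-three pair yields $6(1+4\lambda)a_4=B_1(c_3-d_3)+a_2[\cdots]$, and the bracket coefficients you obtain are \emph{not} the $A$ and $C$ of the theorem. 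The paper's key manoeuvre is to split $(15+60\lambda)a_2a_3-(11+48\lambda)a_2^3=(9+44\lambda)(a_2a_3-a_2^3)+(6+16\lambda)a_2a_3-2(1+2\lambda)a_2^3$ and then replace $(9+44\lambda)(a_2a_3-a_2^3)$ using the \emph{added} degree-three pair; this injects an extra $\tfrac{B_1}{2}(p_3+q_3)$ which combines with the $\tfrac{B_1}{2}(p_3-q_3)$ already present to leave the single term $B_1p_3$, and it is precisely this substitution that produces the constants $A$ and $C$ as stated (note the $B_2-B_1$ summand in $A$, absent from your regrouping). Without feeding in the summed degree-three identity your coefficients are different and the resulting bound is strictly weaker; you should plan to use both degree-three relations for \emph{both} $|a_4|$ estimates, not just the second one.
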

\begin{proof}
Since $f\in ST_{\sigma} ^{\lambda}(\phi)$, there exists two analytic functions $r,s:\mathbb{D}\rightarrow \mathbb{D}$, with $r(0)=0=s(0)$, such that
\begin{equation}\label{bbb}
 \frac{zf'(z)}{ f(z)}+\lambda\frac {z^2 f''(z)}{f(z)}= \varphi(r(z))\quad \textsl{and} \quad \frac{wg'(w)}{ g(w)}+\lambda\frac {w^2 g''(w)}{g(w)}= \varphi(s(w))
\end{equation}
Define the functions $p$ and $q$ by
\begin{equation*}
p(z)=\frac{1+r(z)}{1-r(z)}=1+p_1z+p_2z^2+p_3z^3+p_4z^4+\cdots
\end{equation*}
and
\begin{equation*}
q(w)=\frac{1+s(w)}{1-s(w)}=1+q_1w+q_2w^2+q_3w^3+q_4w^4+\cdots
\end{equation*}
or equivalently,
\begin{equation}\label{cc}
r(z)=\frac{p(z)-1}{p(z)+1}=\frac{1}{2}\bigg(p_1z+\bigg(p_2-\frac{p_1^2}{2}\bigg)z^2+\frac{1}{4}(p_1^3-4p_1p_2+4p_3)z^3+\cdots\bigg)
\end{equation}
and
\begin{equation}\label{dd}
s(w)=\frac{q(w)-1}{q(w)+1}=\frac{1}{2}\bigg(q_1w+\bigg(q_2-\frac{q_1^2}{2}\bigg)w^2+\frac{1}{4}(q_1^3-4q_1q_2+4q_3)w^3+\cdots\bigg).
\end{equation}
Then $p$ and $q$ are analytic in $\mathbb{D}$ with $p(0)=1=q(0)$. Since $r,s:\mathbb{D}\rightarrow \mathbb{D}$, the functions $p$ and $q$ have positive real part in $\mathbb{D}$, and hence $\vert p_i\vert\leq 2$ and $\vert q_i\vert\leq 2$.
Using (\ref{bbb}), (\ref{cc}) and (\ref{dd}), we have
\begin{equation}\label{jj}
 \frac{zf'(z)}{ f(z)}+\lambda\frac {z^2 f''(z)}{f(z)}= \varphi\bigg(\frac{p(z)-1}{p(z)+1}\bigg)\quad \textsl{and}\quad \frac{wg'(w)}{ g(w)}+\lambda\frac {w^2 g''(w)}{g(w)}= \varphi\bigg(\frac{q(w)-1}{q(w)+1}\bigg).
\end{equation}
Using (\ref{ee}) with (\ref{cc}) and (\ref{dd}), it is evident that
\begin{equation}\label{ff}
\begin{split}
\varphi\bigg(\frac{p(z)-1}{p(z)+1}\bigg)&=1+\frac{1}{2}B_1p_1z+\bigg(\frac{1}{2}B_1\bigg(p_2-\frac{1}{2}
p_1^2\bigg)+\frac{1}{4}B_2p_1^2\bigg)z^2  \\
           &\quad+\bigg(\frac{1}{2}B_1\bigg(\frac{p_1^3}{4}-p_1p_2+p_3\bigg)+\frac{1}{2}
B_2p_1\bigg(p_2-\frac{p_1^2}{2}\bigg)+\frac{1}{8}B_3p_1^3\bigg)z^3+\cdots
\end{split}
\end{equation}
and similarly
\begin{equation}\label{gg}
\begin{split}
\varphi\bigg(\frac{q(w)-1}{q(w)+1}\bigg)&=1+\frac{1}{2}B_1q_1w+\bigg(\frac{1}{2}B_1\bigg(q_2-\frac{1}{2}
q_1^2\bigg)+\frac{1}{4}B_2q_1^2\bigg)w^2  \\
           &\quad+\bigg(\frac{1}{2}B_1\bigg(\frac{q_1^3}{4}-q_1q_2+q_3\bigg)+\frac{1}{2}
B_2q_1\bigg(q_2-\frac{q_1^2}{2}\bigg)+\frac{1}{8}B_3q_1^3\bigg)w^3+\cdots.
\end{split}
\end{equation}
Also, using (\ref{aa}), we get
\begin{equation}\label{hh}
\begin{split}
\frac{zf'(z)}{ f(z)}+\lambda\frac {z^2 f''(z)}{f(z)}&=1+(1+2\lambda)a_2z+(2(1+3\lambda)a_3-(1+2\lambda)a_2^2)z^2\\
&\quad+(3(1+4\lambda)a_4-(3+8\lambda)a_2a_3+(1+2\lambda)a_2^3)z^3+\cdots
\end{split}
\end{equation}
and using (\ref{29}), we get
\begin{equation}\label{ii}
\begin{split}
\frac{wg'(w)}{ g(w)}+\lambda\frac {w^2 g''(w)}{g(w)}&=1-(1+2\lambda)a_2w+(-2(1+3\lambda)a_3+(3+10\lambda)a_2^2)w^2\\&\quad +(-3(1+4\lambda)a_4+(12+52\lambda)a_2a_3-(10+46\lambda)a_2^3)w^3+\cdots.
\end{split}
\end{equation}
On equating coefficients of both sides of (\ref{jj}) using (\ref{ff}), (\ref{gg}), (\ref{hh}) and (\ref{ii}), we get
\begin{align}
(1+2\lambda)a_2&=\frac{1}{2}B_1p_1\label{j}\\
2(1+3\lambda)a_3-(1+2\lambda)a_2^2 &=\frac{1}{2}B_1\bigg(p_2-\frac{1}{2} p_1^2\bigg)+\frac{1}{4}B_2p_1^2\label{kk}\\
3(1+4\lambda)a_4-(3+8\lambda)a_2a_3+(1+2\lambda)a_2^3 &=\frac{1}{2}B_1\bigg(\frac{p_1^3}{4}-p_1p_2+p_3\bigg)+\frac{1}{2}B_2p_1\bigg(p_2-\frac{p_1^2}{2}\bigg)\notag\\&\quad+\frac{1}{8}B_3p_1^3\label{ll}\\
-(1+2\lambda)a_2&=\frac{1}{2}B_1q_1\label{mm}\\
-2(1+3\lambda)a_3+(3+10\lambda)a_2^2 &=\frac{1}{2}B_1\bigg(q_2-\frac{1}{2} q_1^2\bigg)+\frac{1}{4}B_2q_1^2\label{nn}\\
-3(1+4\lambda)a_4+(12+52\lambda)a_2a_3-(10+46\lambda)a_2^3 &=\frac{1}{2}B_1\bigg(\frac{q_1^3}{4}-q_1q_2+q_3\bigg)+\frac{1}{2} B_2q_1\bigg(q_2-\frac{q_1^2}{2}\bigg)\notag\\&\quad+\frac{1}{8}B_3q_1^3\label{68}.
\end{align}
From (\ref{j}) and (\ref{mm}), we see that  $p_1=-q_1$. Adding equations (\ref{kk}) and (\ref{nn}), we get
\begin{gather}
(2+8\lambda)a_2^2 = \frac{1}{2}B_1\bigg(p_2+q_2-\frac{1}{2}(p_1^2+q_1^2)\bigg)+\frac{1}{4}B_2(p_1^2+q_1^2)\nonumber\\
                \quad\quad = \frac{1}{2}B_1(p_2+q_2)-\frac{1}{4}(p_1^2+q_1^2)(B_1-B_2).\label{45}
\end{gather}
Using $p_1=-q_1$ and $a_2=B_1p_1/ 2(1+2\lambda)$ in (\ref{45}), we get
\begin{equation}\label{pp}
p_1^2=\dfrac{(p_2+q_2)B_1(1+2\lambda)^2}{(1+4\lambda)B_1^2+(B_1-B_2)(1+2\lambda)^2}.
\end{equation}
Using $\vert p_i\vert\leq 2$ and $\vert q_i\vert\leq 2$ in (\ref{pp}), we have the following refined estimate for $\vert p_1\vert$:
\begin{equation}\label{ppp}
\vert p_1\vert\leq
\begin{cases}
\dfrac{2(1+2\lambda)\sqrt{B_1}}{\sqrt{\vert(1+4\lambda)B_1^2+(B_1-B_2)(1+2\lambda)^2\vert}} \quad \textsl{if}\quad (1+2\lambda)^2B_1\leq \vert  (1+4\lambda)B_1^2\\\quad\quad\quad\quad\quad\quad\quad\quad\quad\quad\quad\quad\quad\quad\quad\quad\quad\quad\quad+(B_1-B_2)(1+2\lambda)^2\vert,\\
2 \quad \textsl{if} \quad (1+2\lambda)^2B_1\geq \vert  (1+4\lambda)B_1^2+(B_1-B_2)(1+2\lambda)^2\vert.
\end{cases}
\end{equation}
Also from (\ref{pp}), we have \[p_2+q_2=\dfrac{((1+4\lambda)B_1^2+(B_1-B_2)(1+2\lambda)^2) p_1^2}{B_1(1+2\lambda)^2}.\] This gives
\begin{equation}\label{ww}
\vert p_2+q_2\vert \leq
\begin{cases}
4 \quad \textsl{if}\quad  B_1(1+2\lambda)^2\leq \vert (1+4\lambda)B_1^2+(B_1-B_2)(1+2\lambda)^2\vert,\\
\dfrac{4\vert (1+4\lambda)B_1^2+(B_1-B_2)(1+2\lambda)^2\vert}{B_1(1+2\lambda)^2}\quad \textsl{if} \quad \ B_1(1+2\lambda)^2\geq \vert (1+4\lambda)B_1^2\\\quad\quad\quad\quad\quad\quad\quad\quad\quad\quad\quad\quad\quad\quad\quad\quad\quad\quad\quad+(B_1-B_2)(1+2\lambda)^2\vert.
\end{cases}
\end{equation}
Now using (\ref{ppp}) in (\ref{j}), we have the desired estimate for $\vert a_2\vert$:
\begin{equation}
\vert a_2\vert \leq
\begin{cases}
\dfrac{B_1\sqrt{B_1}}{\sqrt{\vert(1+4\lambda)B_1^2+(B_1-B_2)(1+2\lambda)^2\vert}}  \quad \textsl{if}\quad (1+2\lambda)^2B_1\leq \vert  (1+4\lambda)B_1^2\\\quad\quad\quad\quad\quad\quad\quad\quad\quad\quad\quad\quad\quad\quad\quad\quad\quad\quad\quad+(B_1-B_2)(1+2\lambda)^2\vert,\\
\dfrac{B_1}{1+2\lambda} \quad \textsl{if} \quad (1+2\lambda)^2B_1\geq \vert  (1+4\lambda)B_1^2+(B_1-B_2)(1+2\lambda)^2\vert.
\end{cases}
\end{equation}
To find an estimate for $\vert a_3\vert$, we express $a_3$ in terms of $p_i$'s and $q_i$'s. Subtracting (\ref{nn}) from (\ref{kk}) and using $p_1=-q_1$, we get
\begin{equation}\label{qq}
4(1+3\lambda)a_3=4(1+3\lambda)a_2^2+\dfrac{B_1}{2}(p_2-q_2).
\end{equation}
Substituting  the value of $a_2$ from (\ref{j}) and then using (\ref{pp}), we get
\begin{align}
4(1+3\lambda)a_3 =&\dfrac{(1+3\lambda)B_1^3(p_2+q_2)}{(1+4\lambda)B_1^2+(B_1-B_2)(1+2\lambda)^2}+
\dfrac{B_1}{2}(p_2-q_2)\nonumber\\
=&\dfrac{B_1}{2((1+4\lambda)B_1^2+(B_1-B_2)(1+2\lambda)^2)}\big((3+10\lambda)B_1^2+(B_1-B_2)
(1+2\lambda)^2)p_2\nonumber\\&+((1+2\lambda)B_1^2-(1+2\lambda)^2(B_1-B_2))q_2\big)\label{23}\\
=&\dfrac{B_1}{2((1+4\lambda)B_1^2+(B_1-B_2)(1+2\lambda)^2)}\big((1+2\lambda)B_1^2-(1+2\lambda)^2(B_1-B_2))(p_2+q_2)\nonumber\\
&+2((1+4\lambda)B_1^2+(B_1-B_2)(1+2\lambda)^2)p_2\big)\label{24}.
\end{align}
Using the estimates  $\vert p_2\vert \leq 2$ and $\vert q_2\vert \leq 2$ in (\ref{23}), we get
\begin{equation}\label{25}
\begin{split}
\vert a_3\vert \leq & \dfrac{B_1}{ 4(1+3\lambda)\vert(1+4\lambda)B_1^2+(B_1-B_2)(1+2\lambda)^2\vert}\bigg(\vert (3+10\lambda)B_1^2+(B_1-B_2)(1+2\lambda)^2\vert\\&+\vert(1+2\lambda)B_1^2-(1+2\lambda)^2(B_1-B_2)\vert\bigg)
\end{split}
\end{equation}
Using  equation (\ref{24}), (\ref{ww}) and $\vert p_2\vert\leq 2$, we have another estimate for $\vert a_3\vert$
\begin{equation}\label{26}
\vert a_3\vert\leq
\begin{cases}
 \dfrac{B_1(\vert(1+2\lambda)B_1^2-(1+2\lambda)^2(B_1-B_2)\vert+\vert(1+4\lambda)B_1^2+(B_1-B_2) (1+2\lambda)^2\vert)}{2(1+3\lambda)\vert(1+4\lambda)B_1^2+(B_1-B_2)(1+2\lambda)^2)\vert}\\
 \quad\quad\quad\quad\quad\quad\quad\quad\quad\quad\quad\quad \quad \textsl{if}\quad (1+2\lambda)^2 B_1\leq \vert (1+4\lambda)B_1^2 +(B_1-B_2)(1+2\lambda)^2\vert,\\
 \dfrac{1}{2(1+3\lambda)(1+2\lambda)}\big(\vert B_1^2-(1+2\lambda)(B_1-B_2)\vert+ (1+2\lambda)B_1\big)\quad \textsl{if} \quad (1+2\lambda)^2 B_1\geq\\\quad\quad\quad\quad\quad\quad\quad\quad\quad\quad\quad \quad\quad\quad\quad\quad\quad\quad\quad\quad\quad\quad\quad\vert (1+4\lambda)B_1^2+(B_1-B_2)(1+2\lambda)^2\vert.
 \end{cases}
\end{equation}
 From equations (\ref{25}) and (\ref{26}), we have the desired estimate for $a_3$.\\
Now,  adding  equations (\ref{ll}) and  (\ref{68}), we have
\begin{equation}\label{rr}
(9+44\lambda)(a_2a_3-a_2^3)=\dfrac{(B_2-B_1)}{2}p_1p_2+\dfrac{(B_2-B_1)}{2}q_1q_2+\dfrac{B_1}{2}(p_3+q_3).
\end{equation}
Using (\ref{j}) and  (\ref{qq}) in (\ref{rr}), we have
 \begin{equation}\label{diff}
p_1(p_2-q_2)=\frac{8(1+3\lambda)(1+2\lambda)B_1(p_3+q_3)}{(9+44\lambda)B_1^2-8(1+3\lambda)(1+2\lambda)(B_2-B_1)}.
\end{equation}
Now, to find an estimate  for $\vert a_4 \vert$, we subtract equation (\ref{68}) from (\ref{ll}) and then using (\ref{rr}), (\ref{j}), (\ref{qq}), we get
\begin{align}
6(1+4\lambda)a_4 &=(15+60\lambda)a_2a_3-(11+48\lambda)a_2^3+\frac{(B_1+B_3-2B_2)}{4}p_1^3+\frac{B_2-B_1}{2}p_1p_2\notag\\&\quad+\frac{B_1-B_2}{2}q_1q_2+\frac{B_1}{2}(p_3-q_3)\notag\\
&=(9+44\lambda)(a_2a_3-a_2^2)+(6+16\lambda)a_2a_3-2(1+2\lambda)a_2^3+\frac{(B_1+B_3-2B_2)}{4}p_1^3\notag\\&\quad+\frac{B_2-B_1}{2}p_1p_2+\frac{B_1-B_2}{2}q_1q_2+\frac{B_1}{2}(p_3+q_3)\notag\\
 &=\frac{(B_2-B_1)}{2}p_1p_2+\frac{(B_2-B_1)}{2}q_1q_2+\frac{B_1}{2}(p_3-q_3)+(6+16\lambda)\frac{B_1}{2(1+2\lambda)}p_1\notag\\&\quad\bigg(\frac{B_1^2}{4(1+2\lambda)^2}p_1^2+\frac{B_1}{8(1+3\lambda)}(p_2-q_2)\bigg)-2(1+2\lambda)\frac{B_1^3}{8(1+2\lambda)^3}p_1^3\notag\\&\quad+\frac{(B_1+B_3-2B_2)}{4}p_1^3+\frac{B_2-B_1}{2}p_1p_2+\frac{B_1-B_2}{2}q_1q_2+\frac{B_1}{2}(p_3-q_3)\label{70}\\
&=(B_2-B_1)p_1p_2+B_1p_3+\frac{(3+8\lambda)B_1^2}{8(1+2\lambda)(1+3\lambda)}p_1p_2-\frac{(3+8\lambda)B_1^2}{8(1+2\lambda)(1+3\lambda)}p_1q_2\notag\\&\quad+\frac{1}{4}p_1^3\bigg(\frac{2(1+3\lambda)B_1^3}{(1+2\lambda)^3}+(B_1+B_3-2B_2)\bigg)\notag\\
&=(B_2-B_1)p_1p_2+B_1p_3+\frac{(3+8\lambda)B_1^2}{8(1+2\lambda)(1+3\lambda)}p_1p_2-\frac{(3+8\lambda)B_1^2}{8(1+2\lambda)(1+3\lambda)}p_1q_2\notag\\&\quad+\frac{B_1(1+2\lambda)^2}{4((1+4\lambda)B_1^2+(B_1-B_2)(1+2\lambda)^2)}\bigg(\frac{2(1+3\lambda)B_1^3}{(1+2\lambda)^3}+(B_1+B_3-2B_2)\bigg)p_1(p_2+q_2)\notag\\
&=B_1p_3+p_1p_2\bigg((B_2-B_1)+\frac{(3+8\lambda)B_1^2}{8(1+2\lambda)(1+3\lambda)}+\frac{B_1(1+2\lambda)^2}{4((1+4\lambda)B_1^2+(B_1-B_2)(1+2\lambda)^2)}\notag\\&\quad\bigg(\frac{2(1+3\lambda)B_1^3}{(1+2\lambda)^3}+(B_1+B_3-2B_2)\bigg)\bigg)+p_1q_2\bigg(-\frac{(3+8\lambda)B_1^2}{8(1+2\lambda)(1+3\lambda)}\notag\\&\quad+\frac{B_1(1+2\lambda)^2}{4((1+4\lambda)B_1^2+(B_1-B_2)(1+2\lambda)^2)}\bigg(\frac{2(1+3\lambda)B_1^3}{(1+2\lambda)^3}+(B_1+B_3-2B_2)\bigg)\bigg).\label{69}
\end{align}
Let
\begin{equation}\label{tt}
\begin{split}
 A=&(B_2-B_1)+\frac{(3+8\lambda)B_1^2}{8(1+2\lambda)(1+3\lambda)}+\frac{B_1(1+2\lambda)^2}{4((1+4\lambda)B_1^2+(B_1-B_2)(1+2\lambda)^2)}\\&\quad\bigg(\frac{2(1+3\lambda)B_1^3}{(1+2\lambda)^3}+(B_1+B_3-2B_2)\bigg),
\end{split}
\end{equation}
\begin{equation}\label{uu}
C=-\frac{(3+8\lambda)B_1^2}{8(1+2\lambda)(1+3\lambda)}+\frac{B_1(1+2\lambda)^2}{4((1+4\lambda)B_1^2+(B_1-B_2)(1+2\lambda)^2)}\bigg(\frac{2(1+3\lambda)B_1^3}{(1+2\lambda)^3}+(B_1+B_3-2B_2)\bigg).
\end{equation}
Then (\ref{69}) reduces to  \[6(1+4\lambda)a_4=B_1p_3+Ap_1p_2+Cp_1q_2.\]
Now using $\vert p_2\vert\leq 2$ and $\vert q_2\vert \leq 2$ and refined estimate for $\vert p_1\vert$ from (\ref{ppp}), we  have \\
\begin{equation}\label{xx}
\vert a_4\vert \leq \frac{1}{3(1+4\lambda)}
\begin{cases}
B_1+\dfrac {2\sqrt{B_1}(1+2\lambda)}{\sqrt{\vert(1+4\lambda)B_1^2+(B_1-B_2)(1+2\lambda)^2\vert}}(\vert A\vert+\vert C\vert) \quad \textsl{if} (1+2\lambda)^2B_1\leq \\ \quad \quad \quad \quad \quad\quad\quad\quad\quad\quad\quad\quad\quad\quad \quad\quad\vert  (1+4\lambda)B_1^2+(B_1-B_2)(1+2\lambda)^2\vert,\\
B_1+2(\vert A\vert +\vert C\vert)\quad \textsl{if}\quad  (1+2\lambda)^2B_1\geq \vert  (1+4\lambda)B_1^2+(B_1-B_2)(1+2\lambda)^2\vert.
\end{cases}
\end{equation}
where $A$ and $C$ are given by (\ref{tt}) and (\ref{uu}) respectively.

We now find another estimate for $\vert a_4\vert $. From equation (\ref{70}), we get
\begin{equation*}
\begin{split}
6(1+4\lambda)a_4 &= B_1p_3+\frac{(B_2-B_1)}{2}p_1(p_2+q_2)+\bigg(\frac{(3+8\lambda)B_1^2}{8(1+2\lambda)(1+3\lambda)}+\frac{(B_2-B_1)}{2}\bigg)\\&\quad p_1(p_2-q_2)+\frac{(2(1+3\lambda)B_1^3+(1+2\lambda)^3(B_1+B_3-2B_2))}{4(1+2\lambda)^3}p_1^3.
\end{split}
\end{equation*}
Using equations (\ref{pp}) and (\ref{diff}), we have
\begin{equation*}
\begin{split}
6(1+4\lambda)a_4 & =\frac{B_1}{(9+44\lambda))B_1^2-8(1+2\lambda)(1+3\lambda)(B_2-B_1)}((12+52\lambda)B_1^2\\&\quad -4(1+2\lambda)(1+3\lambda)(B_2-B_1))p_3+((3+8\lambda)B_1^2+4(1+2\lambda)(1+3\lambda)(B_2-B_1))q_3)\\&\quad +\frac{1}{2}\bigg((B_2-B_1)+\frac{B_1(2(1+3\lambda)B_1^3+(1+2\lambda)^3(B_1+B_3-2B_2))}{2(1+2\lambda)((1+4\lambda)B_1^2+(B_1-B_2)(1+2\lambda)^2)}\bigg)p_1(p_2+q_2).
\end{split}
\end{equation*}
Using the estimate $\vert p_i\vert\leq 2$ and $\vert q_i\vert\leq 2$ for $i=2,3$ and the refined estimate for $\vert p_1\vert $ given by (\ref{ppp}),
whenever $(1+2\lambda)^2B_1\leq \vert(1+4\lambda)B_1^2+(B_1-B_2)(1+2\lambda)^2\vert $, we get
\begin{align}
\vert a_4\vert \leq &
\dfrac{2 B_1}{ 6(1+4\lambda)\vert (9+44\lambda)B_1^2- 8(1+2\lambda)(1+3\lambda)(B_2 -B_1)\vert}\bigg(\vert (12+52\lambda)B_1^2\notag\\&\quad-4(1+2\lambda)(1+3\lambda)(B_2-B_1)\vert +
\vert(3+8\lambda)B_1^2 +4(1+2\lambda)(1+3\lambda)(B_2-B_1)\vert\bigg)\notag\\&\quad +\dfrac{2(1+2\lambda)\sqrt{B_1}}{3(1+4\lambda)\sqrt{\vert(1+4\lambda)B_1^2+(B_1-B_2)(1+2\lambda)^2\vert}}\notag\\&\bigg\vert (B_2-B_1)+\dfrac{B_1(2(1+3\lambda)B_1^3+(1+2\lambda)^3(B_1+B_3-2B_2))}{2(1+2\lambda)((1+4\lambda)B_1^2+(B_1-B_2)(1+2\lambda)^2)}\bigg\vert.\label{21}
\end{align}
And whenever $(1+2\lambda)^2B_1\geq \vert(1+4\lambda)B_1^2+(B_1-B_2)(1+2\lambda)^2\vert $, we get\\
\begin{align}
\vert a_4\vert  \leq & \dfrac{2 B_1}{6(1+44\lambda)\vert (9+44\lambda)B_1^2-8(1+2\lambda)(1+3\lambda)(B_2-B_1)\vert}\bigg(\vert (12+52\lambda)B_1^2\notag\\&\quad-4(1+2\lambda)(1+3\lambda)(B_2-B_1)\vert +\vert(3+8\lambda)B_1^2+4(1+2\lambda)(1+3\lambda)(B_2-B_1)\vert\bigg)\notag\\&\quad +\dfrac{2}{3(1+4\lambda)}\bigg\vert (B_2-B_1) +\dfrac{B_1(2(1+3\lambda)B_1^3+(1+2\lambda)^3(B_1+B_3-2B_2))}{2(1+2\lambda)((1+4\lambda)B_1^2+(B_1-B_2)(1+2\lambda)^2)}\bigg\vert.\label{22}
\end{align}
 From (\ref{xx}), (\ref{21}) and (\ref{22}), the desired estimate on $\vert a_4\vert$ follows.
\end{proof}
For $\lambda=0$, Theorem \ref{vv} readily yields the following coefficient estimates for  Ma-Minda bi-starlike functions.
\begin{cor}\label{yy}
Let $f$ given by (\ref{aa}) be in the class $ST_{\sigma}(\varphi)$.
\begin{itemize}
\item[(a)] For $B_1\leq \vert  B_1^2+B_1-B_2\vert$
\begin{align}
\vert a_2\vert \leq & \dfrac{B_1\sqrt{B_1}}{\sqrt{\vert B_1^2+B_1-B_2\vert}}\label{27},\\
\vert a_3\vert \leq &\min \dfrac{B_1}{ 2\vert B_1^2+(B_1-B_2)\vert} \bigg\lbrace\dfrac{1}{2} \bigg(\vert (3B_1^2+(B_1-B_2)\vert +\vert B_1^2-(B_1-B_2)\vert\bigg),\notag\\&\quad \vert B_1^2-(B_1-B_2)\vert+\vert B_1^2+(B_1-B_2)\vert\bigg\rbrace,\label{28}
 &\intertext{and}
\vert a_4\vert \leq &
\min \bigg\lbrace\dfrac{B_1}{3}+\dfrac {2\sqrt{B_1}}{3\sqrt{\vert B_1^2+B_1-B_2\vert}}(\vert A\vert+\vert C
\vert),\notag\\&\quad\quad\dfrac{ B_1}{3\vert 9B_1^2-8(B_2-B_1)\vert}\bigg(\vert 12 B_1^2-4(B_2-B_1)\vert +\vert 3B_1^2+4(B_2- B_1)\vert\bigg)\notag\\&\quad\quad +\dfrac{2\sqrt{B_1}}{3\sqrt{\vert B_1^2+(B_1-B_2)\vert}}\bigg\vert (B_2-B_1)+\dfrac{B_1(2 B_1^3+(B_1+B_3-2B_2))}{2(B_1^2+(B_1-B_2))}\bigg\vert\bigg\rbrace.\label{z}
\end{align}
\item[(b)] For  $B_1\geq \vert  B_1^2+B_1-B_2\vert$
\begin{align*}
\vert a_2\vert \leq & B_1,\\
\vert a_3\vert \leq & \min \bigg\lbrace \dfrac{B_1}{ 4\vert B_1^2+(B_1-B_2)\vert}\bigg(\vert 3 B_1^2+(B_1-B_2)\vert+\vert B_1^2-(B_1-B_2)\vert\bigg), \\&\quad\quad\dfrac{1}{2}\big(\vert B_1^2-(B_1-B_2)\vert+ B_1\big)\bigg\rbrace,\\
\vert a_4\vert \leq &
\min \bigg\lbrace\dfrac{B_1}{3}+\dfrac {2}{3}(\vert A\vert+\vert C), \\&\quad\quad\dfrac{ B_1}{3\vert 9B_1^2-8(B_2-B_1)\vert}\bigg(\vert 12B_1^2-4(B_2-B_1)\vert +\vert 3
 B_1^2+4(B_2- B_1)\vert\bigg)\\&\quad\quad +\frac{2}{3}\bigg\vert (B_2-B_1)+\dfrac{B_1(2 B_1^3+(B_1+B_3-2B_2))}{2(B_1^2+(B_1-B_2))}\bigg\vert\bigg\rbrace.
 \end{align*}
 \end{itemize}
where \[A=\frac{7 {B_1}^4-5 {B_1}^3+5 {B_1}^2 {B_2}-6 {B_1}^2+12 {B_1} {B_2}+2 {B_1}{B_3}-8 {B_2}^2}{8 \left({B_1}^2+{B_1}-{B_2}\right)}\] and \[C=
 \frac{{B_1} \left({B_1}^3-3 {B_1}^2+3 {B_1} {B_2}+2 {B_1}-4 {B_2}+2 {B_3}\right)}{8 \left({B_1}^2+{B_1}-{B_2}\right)}.\]
\end{cor}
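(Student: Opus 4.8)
The plan is to specialize Theorem \ref{vv} to the case $\lambda = 0$. By the remark following Definition \ref{75}, the class $ST_\sigma(\varphi)$ is precisely $ST_\sigma^{0}(\varphi)$, so if $f$ is given by (\ref{aa}) and lies in $ST_\sigma(\varphi)$, then every estimate in Theorem \ref{vv} holds for $f$ with $\lambda = 0$. The first step is to carry out this substitution in all three bounds: the factors $1+2\lambda$, $1+3\lambda$ and $1+4\lambda$ become $1$, while $3+10\lambda$, $3+8\lambda$, $9+44\lambda$ and $12+52\lambda$ become $3$, $3$, $9$ and $12$ respectively. The dichotomy of Theorem \ref{vv} governed by comparing $(1+2\lambda)^2 B_1$ with $|(1+4\lambda)B_1^2 + (B_1-B_2)(1+2\lambda)^2|$ thereby reduces to comparing $B_1$ with $|B_1^2 + B_1 - B_2|$, matching the two cases (a) and (b) of the corollary, and the displayed chains of inequalities for $|a_2|$, $|a_3|$ and $|a_4|$ collapse to the asserted forms after factoring out the common prefactor $B_1/(2|B_1^2+B_1-B_2|)$ in the $|a_3|$ bound and using $2B_1/6 = B_1/3$ in the $|a_4|$ bound.

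The second step is to simplify the auxiliary quantities $A$ and $C$ of (\ref{tt}) and (\ref{uu}) at $\lambda = 0$. These become
\[
A = (B_2 - B_1) + \frac{3B_1^2}{8} + \frac{B_1\bigl(2B_1^3 + B_1 + B_3 - 2B_2\bigr)}{4(B_1^2 + B_1 - B_2)}, \qquad
C = -\frac{3B_1^2}{8} + \frac{B_1\bigl(2B_1^3 + B_1 + B_3 - 2B_2\bigr)}{4(B_1^2 + B_1 - B_2)}.
\]
Bringing each expression over the common denominator $8(B_1^2 + B_1 - B_2)$ and collecting the monomials in $B_1$, $B_2$, $B_3$ produces exactly the closed forms for $A$ and $C$ displayed at the end of the corollary; this is a short polynomial identity that can be verified term by term.

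I expect no genuine obstacle: the corollary is a pure specialization of Theorem \ref{vv}, so no new analytic input is required. The only point needing care is bookkeeping — making sure the two members of each $\min$ in the $|a_3|$ and $|a_4|$ estimates are transcribed with the correct pairing to the two regimes, in particular that (\ref{26}) contributes its first branch under $B_1 \leq |B_1^2 + B_1 - B_2|$ and its second branch under $B_1 \geq |B_1^2 + B_1 - B_2|$, and that the bound (\ref{xx}) is matched with (\ref{21}) in the first case and with (\ref{22}) in the second.
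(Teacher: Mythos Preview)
Your proposal is correct and follows exactly the paper's approach: the corollary is stated immediately after Theorem \ref{vv} with the remark that setting $\lambda=0$ ``readily yields'' it, and your substitution and algebraic simplification of $A$ and $C$ over the common denominator $8(B_1^2+B_1-B_2)$ are precisely what is needed to recover the displayed closed forms.
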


\begin{rmk}\label{46}
 Corollary  \ref{yy}  improves the bounds obtained for $\vert a_2\vert$ and $\vert a_3\vert$ in \cite[ Corollary 2.1]{ali}.
\end{rmk}
\begin{rmk}\label{74}
 For the class of bi-starlike functions of order $\rho$, $0\leq \rho<1$, the function $\varphi$ is given by  \[\varphi(z)=\frac{1+(1-2\rho)z}{1-z} = 1+ 2(1-\rho )z+2(1-\rho)z^2+2(1-\rho) z^3+\cdots \] and so $B_1=B_2=B_3=2(1-\rho)$.
Hence from Corollary \ref{yy}, we have the following estimates for $\vert a_2\vert $ and $\vert a_3\vert$ which coincides with the result (for $\lambda=0$) \cite[Theorem 1]{as} of Mishra and Barik.
\begin{equation*}
\vert a_2\vert \leq
\begin{cases}
\sqrt{2(1-\rho)}  \quad \textsl{if}\quad 0\leq\rho\leq1/2,\\
2(1-\rho) \quad \textsl{if}\quad 1/2\leq \rho<1.
\end{cases}
\end{equation*}
and
\begin{equation*}
\vert a_3\vert \leq
\begin{cases}
2(1-\rho) \quad \textsl{if}\quad 0\leq \rho\leq 1/2,\\
(1-\rho)(3-2\rho)  \quad \textsl{if}\quad 1/2\leq\rho <1.
\end{cases}
\end{equation*}
Further, the inequality (\ref{z}) for  $B_1=B_2=B_3=2(1-\rho)$ reduces to
\begin{equation*}
\vert a_4\vert\leq
\begin{cases}
\min\bigg\lbrace \dfrac{2(1-\rho)}{3}(1+2\sqrt{2(1-\rho)}), \dfrac{2(1-\rho)}{3}(\frac{5}{3}+ 2 \sqrt{2(1-\rho ))}\bigg\rbrace\quad \textsl{if} \quad 0\leq \rho \leq 1/2,\\
\min\bigg\lbrace\dfrac{2(1-\rho)}{3}(1+4(1-\rho),\dfrac{2(1-\rho)}{3}(\dfrac{5}{3}+4(1-\rho))\bigg\rbrace\quad \textsl{if} \quad 1/2\leq \rho <1.
\end{cases}
\end{equation*}
which gives
\begin{equation*}
\vert a_4\vert\leq
\begin{cases}
 \dfrac{2(1-\rho)}{3}(1+2\sqrt{2(1-\rho)}\quad \textsl{if} \quad 0\leq \beta \leq 1/2,\\
\dfrac{2(1-\rho)}{3}(1+4(1-\rho)\quad \textsl{if} \quad 1/2\leq \beta <1.
\end{cases}
\end{equation*}
This is the estimate given by Mishra and Soren  in \cite [Theorem 2.2]{ak}.\\
\end{rmk}
\begin{rmk}
 For the class of strongly bi-starlike functions of order $\beta$, $0< \beta\leq 1$, the function $\varphi$ is given by \[\varphi(z)=\bigg(\frac{1+z}{1-z}\bigg)^\beta = 1+ 2\beta z+2\beta^2z^2+\frac{2}{3} \left(2 \beta ^3+\beta \right) z^3+\cdots \] and so $B_1=2\beta$, $B_2=2\beta^2$, $B_3=\frac{4}{3}\beta^{3}+\frac{2}{3}\beta$. Also,
for strongly bi-starlike functions   $B_1\ngeq \vert  B_1^2+B_1-B_2\vert$. Equation (\ref{27})
of  Corollary \ref{yy},  gives
 \begin{equation*}
 \vert a_2\vert \leq \frac{2\beta}{\sqrt{1+\beta}}
  \end{equation*}
  which coincides with the estimate given by Brannan and Taha in \cite{bt}.
Also,  (\ref{28}) reduces to the following estimate given by Mishra and Soren  in \cite[ Theorem 2.1]{ak}:
\begin{equation*}
\vert a_3\vert \leq
\begin{cases}
\beta \quad \textsl{if}\quad 0<\beta\leq 1/3,\\
\dfrac{4\beta^2}{1+\beta}  \quad \textsl{if}\quad 1/3\leq\beta\leq 1.
\end{cases}
\end{equation*}
Equation (\ref{z}) gives
\begin{equation*}
\vert a_4\vert \leq
\begin{cases}
\dfrac{2\beta}{3}  \left(1-\dfrac{2 \left(16 \beta ^2-3 \beta -1\right)}{3 (\beta +1)^{3/2}}\right)\quad \textsl{if} \quad  0<\beta<\dfrac{3+\sqrt{37}}{32},\\
\dfrac{2\beta}{3}  \left(1+\dfrac{2 \left(16 \beta ^2-3 \beta -1\right)}{3 (\beta +1)^{3/2}}\right)\quad \textsl{if} \quad  \dfrac{3+\sqrt{37}}{32}\leq\beta\leq 1.
\end{cases}
\end{equation*}
And, this improves the estimate for the fourth coefficient given by Mishra and Soren in \cite [ Theorem 2.1]{ak}.
\end{rmk}

\begin{Def}
For $\lambda\geq 0$, the class $M ^{\lambda}(\varphi)$  consists of functions $f\in \mathcal{A}$  satisfying
\[ \quad \lambda\bigg(1+\frac{zf''(z)}{ f'(z)}\bigg)+(1-\lambda)\frac {z f'(z)}{f'(z)}\prec \varphi \quad (z\in\mathbb{D}).\] The class $M_{\sigma} ^{\lambda}(\varphi)$ consists of $f\in \sigma$ such that $f,g\in M ^{\lambda}(\varphi)$
 where $g$ is the analytic continuation of $f^{-1}$ to the unit disk $\mathbb{D}$.
\end{Def}

\begin{thm}
Let $f \in M_{\sigma} ^{\lambda}(\phi)$. \\
\begin{itemize}
\item[(a)] If $(1+\lambda)B_1\leq \vert B_1^2+(1+\lambda)(B_1-B_2)\vert$, then\\
the  coefficients $a_2$, $a_3$ and $a_4$ satisfies
\begin{align*}
\vert a_2\vert \leq &
\dfrac{B_1\sqrt{B_1}}{\sqrt{(1+\lambda)\vert B_1^2+(1+\lambda)(B_1-B_2)\vert}},\\
 \vert a_3\vert \leq & \min \dfrac{B_1}{2(1+2\lambda)(1+\lambda)\vert B_1^2+(B_1-B_2)(1+\lambda)\vert}\bigg\{ \frac{1}{2}\big(\vert(3+5\lambda)B_1^2+(1+\lambda)^2(B_1-B_2)\vert\\&\quad  +\vert(1+3\lambda)B_1^2-(1+\lambda)^2(B_1-B_2)\vert\big),\big(\vert(1+3\lambda)B_1^2-(1+\lambda)^2(B_1-B_2)\vert\\&\quad+\vert(1+\lambda)B_1^2+(1+\lambda)^2(B_1-B_2)\vert\big)\bigg\},\\
\vert a_4\vert \leq & \min\dfrac{1}{3(1+3\lambda)} \bigg\{B_1+  \dfrac{2\sqrt{(1+\lambda)B_1}}{\sqrt{\vert B_1^2+(1+\lambda)(B_1-B_2)\vert}}( \vert A_1\vert+\vert C_1\vert),\\&\quad
B_1\bigg(\bigg\vert\frac{(12+30\lambda)B_1^2-4(1+\lambda)(1+2\lambda)(B_2-B_1)}{(9+15\lambda)B_1^2-8(1+2\lambda)(1+\lambda)(B_2-B_1)}\bigg\vert\\&\quad+\bigg\vert\frac{4(1+\lambda)(1+2\lambda)(B_2-B_1)+3{B_1}^2 (1+5 \lambda )}{(9+15\lambda)B_1^2-8(1+2\lambda)(1+\lambda)(B_2-B_1)}\bigg\vert\bigg) + \dfrac{2\sqrt{(1+\lambda)B_1}}{\sqrt{\vert B_1^2+(1+\lambda)(B_1-B_2)\vert}}\\&\quad\bigg(\bigg\vert B_2-B_1+ \frac{B_1((B_1+B_3-2B_2)(1+\lambda)^3+2(1+4\lambda)B_1^3)}{2(1+\lambda)^2(B_1^2+(1+\lambda)(B_1-B_2))}\bigg\vert\bigg)\bigg\}.
\end{align*}
\item[(b)] If $(1+\lambda)B_1\leq \vert B_1^2+(1+\lambda)(B_1-B_2)\vert$, then\\
the  coefficients $a_2$, $a_3$ and $a_4$ satisfies
\begin{align*}
\vert a_2\vert \leq & \dfrac{B_1}{1+\lambda},\\
\vert a_3\vert \leq &\min  \bigg\{\frac{B_1}{4(1+2\lambda)(1+\lambda)\vert B_1^2+(B_1-B_2)(1+\lambda)\vert}\\&\quad\quad(\vert(3+5\lambda)B_1^2+(1+\lambda)^2(B_1-B_2)\vert  +\vert(1+3\lambda)B_1^2-(1+\lambda)^2(B_1-B_2)\vert),\\&\quad\quad \dfrac{1}{2(1+2\lambda)(1+\lambda)^2}(\vert (1+3\lambda)B_1^2-(1+\lambda)^2(B_1-B_2)\vert+B_1(1+\lambda)^2)\bigg\},\\
\vert a_4\vert \leq &\min \dfrac{1}{3(1+3\lambda)}\bigg\{\bigg(B_1+2(\vert A_1\vert +\vert C_1\vert )\bigg),\\&\quad\quad
B_1\bigg(\bigg\vert\frac{(12+30\lambda)B_1^2-4(1+\lambda)(1+2\lambda)(B_2-B_1)}{(9+15\lambda)B_1^2-8(1+2\lambda)(1+\lambda)(B_2-B_1)}\bigg\vert\\&\quad\quad+\bigg\vert\frac{4(1+\lambda)(1+2\lambda)(B_2-B_1)+3{B_1}^2 (1+5 \lambda )}{(9+15\lambda)B_1^2-8(1+2\lambda)(1+\lambda)(B_2-B_1)}\bigg\vert\bigg)\\&\quad\quad +2\bigg(\bigg\vert B_2-B_1+ \frac{B_1((B_1+B_3-2B_2)(1+\lambda)^3+2(1+4\lambda)B_1^3)}{2(1+\lambda)^2(B_1^2+(1+\lambda)(B_1-B_2))}\bigg\vert\bigg)\bigg\}.
\end{align*}
\end{itemize}
where the constants $A_1$ and $C_1$ are given by
\begin{align*}
 A_1=&(B_2-B_1)+\dfrac{3(1+5\lambda)B_1^2}{8(1+\lambda)(1+2\lambda)}+\dfrac{B_1(1+\lambda)}{4(B_1^2+(1+\lambda)(B_1-B_2))}\\&\quad\bigg((B_1+B_3-2B_2)+\dfrac{2(1+4\lambda)B_1^3}{(1+\lambda)^3}\bigg),\\
& \text{and}\\
C_1=&-\dfrac{3(1+5\lambda)B_1^2}{8(1+\lambda)(1+2\lambda)}+\dfrac{B_1(1+\lambda)}{4(B_1^2+(1+\lambda)(B_1-B_2))}\\&\quad\bigg((B_1+B_3-2B_2)+\dfrac{2(1+4\lambda)B_1^3}{(1+\lambda)^3}\bigg).
\end{align*}
\end{thm}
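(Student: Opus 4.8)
The plan is to rerun the proof of Theorem \ref{vv}, with the differential operator $zf'(z)/f(z)+\lambda z^2f''(z)/f(z)$ replaced throughout by $J_\lambda f:=\lambda\bigl(1+zf''(z)/f'(z)\bigr)+(1-\lambda)zf'(z)/f(z)$. Since $f\in M_\sigma^\lambda(\varphi)$, there are Schwarz functions $r,s:\mathbb{D}\to\mathbb{D}$ with $r(0)=s(0)=0$ such that $J_\lambda f=\varphi(r(z))$ and $J_\lambda g=\varphi(s(w))$, where $g$ is the analytic continuation of $f^{-1}$. As in the proof of Theorem \ref{vv}, set $p=(1+r)/(1-r)=1+p_1z+\cdots$ and $q=(1+s)/(1-s)=1+q_1w+\cdots$; these have positive real part, so $|p_i|,|q_i|\le 2$, and $r,s$ admit the expansions (\ref{cc}), (\ref{dd}), so that $\varphi\circ r$ and $\varphi\circ s$ are given by the series (\ref{ff}), (\ref{gg}) without change. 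The one new computation is the Taylor expansion of $J_\lambda$: from (\ref{aa}) and (\ref{29}) one finds that $J_\lambda f$ has $z$-, $z^2$-, $z^3$-coefficients $(1+\lambda)a_2$, $(2+4\lambda)a_3-(1+3\lambda)a_2^2$, and $(3+9\lambda)a_4-(3+15\lambda)a_2a_3+(1+7\lambda)a_2^3$, while $J_\lambda g$ has $w$-, $w^2$-, $w^3$-coefficients $-(1+\lambda)a_2$, $(3+5\lambda)a_2^2-(2+4\lambda)a_3$, and $-(3+9\lambda)a_4+(12+30\lambda)a_2a_3-(10+22\lambda)a_2^3$.

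Equating the coefficients of $z,z^2,z^3$ and of $w,w^2,w^3$ gives six relations. The first pair yields $p_1=-q_1$ and $a_2=\frac{B_1p_1}{2(1+\lambda)}$; adding the second pair and eliminating $a_2$ gives
\[
p_1^2=\frac{B_1(1+\lambda)(p_2+q_2)}{B_1^2+(1+\lambda)(B_1-B_2)},
\]
from which $|p_i|\le 2$ yields the two-case refined bound on $|p_1|$ (as in (\ref{ppp})), hence the stated dichotomy for $|a_2|$; the same identity also gives the two-case bound on $|p_2+q_2|$ as in (\ref{ww}). Subtracting the second pair gives $4(1+2\lambda)(a_3-a_2^2)=\frac{1}{2}B_1(p_2-q_2)$; substituting $a_2^2$ once with $p_2,q_2$ kept apart, and once after regrouping through the displayed $p_1^2$-identity (as a multiple of $p_2+q_2$ plus a multiple of $p_2$), produces the two representations of $a_3$, and bounding by $|p_i|\le 2$, respectively by the $|p_2+q_2|$-bound together with $|p_2|\le 2$, gives the two quantities inside the $\min$, in both cases (a) and (b).

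For $|a_4|$, adding the two third-order relations and using $p_1=-q_1$, $a_3-a_2^2=\frac{B_1(p_2-q_2)}{8(1+2\lambda)}$ and $a_2=\frac{B_1p_1}{2(1+\lambda)}$ gives
\[
(9+15\lambda)(a_2a_3-a_2^3)=\frac{B_2-B_1}{2}p_1(p_2-q_2)+\frac{B_1}{2}(p_3+q_3),
\]
and hence the auxiliary identity $p_1(p_2-q_2)=8(1+\lambda)(1+2\lambda)B_1(p_3+q_3)/\bigl((9+15\lambda)B_1^2-8(1+\lambda)(1+2\lambda)(B_2-B_1)\bigr)$. Subtracting the two third-order relations, substituting the last display for $(9+15\lambda)(a_2a_3-a_2^3)$ so that its $(p_3+q_3)$ merges with the $(p_3-q_3)$ coming from the subtraction into $B_1p_3$, then eliminating $a_2a_3$, $a_2^3$, $a_2$ and converting $p_1^3=B_1(1+\lambda)p_1(p_2+q_2)/\bigl(B_1^2+(1+\lambda)(B_1-B_2)\bigr)$, collapses everything to $6(1+3\lambda)a_4=B_1p_3+A_1p_1p_2+C_1p_1q_2$ with $A_1,C_1$ as in the statement; bounding by $|p_i|\le 2$ and the refined $|p_1|$-bound (or by $|p_1|\le 2$ in case (b)) gives the first quantity in the $\min$. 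Alternatively, substituting $p_1(p_2-q_2)$ through the auxiliary identity while keeping $p_3$ and $q_3$ separate, and again converting $p_1^3$ as above, rewrites $6(1+3\lambda)a_4$ as a combination of $p_3$, $q_3$ and $p_1(p_2+q_2)$ whose coefficients are exactly those of the second quantity; bounding $|p_3|,|q_3|\le 2$ and $|p_1(p_2+q_2)|\le 4|p_1|$ and using the refined $|p_1|$-bound (or $|p_1|\le 2$ in case (b)) finishes the proof.

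The only real obstacle is bookkeeping: carrying the expansions of $J_\lambda f$, $J_\lambda g$ and $\varphi\circ r$ accurately to order $z^3$, and arranging the substitutions so that the many terms consolidate exactly into $A_1$, $C_1$ and the denominator $(9+15\lambda)B_1^2-8(1+\lambda)(1+2\lambda)(B_2-B_1)$. No idea beyond Theorem \ref{vv} is needed; one is simply tracking that the constants $1+2\lambda,\ 1+3\lambda,\ 1+4\lambda,\ 3+8\lambda,\ 9+44\lambda$ of the bi-starlike case are here replaced by $1+\lambda,\ 1+2\lambda,\ 1+3\lambda,\ 3+15\lambda,\ 9+15\lambda$ respectively.
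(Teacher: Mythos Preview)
Your proposal is correct and follows essentially the same route as the paper's own proof: the same Schwarz/Carath\'eodory setup, the same six coefficient relations for $J_\lambda f$ and $J_\lambda g$, the same derivation of the refined $|p_1|$ and $|p_2+q_2|$ bounds, the two representations of $a_3$, the auxiliary identity for $p_1(p_2-q_2)$, and the two alternative expressions for $6(1+3\lambda)a_4$ leading to the two quantities inside each $\min$. Your bookkeeping of the $J_\lambda$-expansions matches the paper's equations exactly, so nothing further is needed.
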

\begin{proof}
Since $f\in M_{\sigma} ^{\lambda}(\phi)$, there exists two analytic functions $r,s:\mathbb{D}\rightarrow \mathbb{D}$, with $r(0)=0=s(0)$, such that
\begin{equation}\label{30}
 \lambda\bigg(1+\frac{zf''(z)}{ f'(z)}\bigg)+(1-\lambda)\frac {z f'(z)}{f(z)}= \varphi(r(z))\quad \textsl{and} \quad \lambda\bigg(1+\frac{wg''(w)}{ g'(w)}\bigg)+(1-\lambda)\frac {w g'(w)}{g(w)}= \varphi(s(w))
\end{equation}
Define the functions $p$ and $q$ by
\begin{equation*}
p(z)=\frac{1+r(z)}{1-r(z)}=1+p_1z+p_2z^2+p_3z^3+p_4z^4+\cdots
\end{equation*}
and
\begin{equation*}
q(w)=\frac{1+s(w)}{1-s(w)}=1+q_1w+q_2w^2+q_3w^3+q_4w^4+\cdots
\end{equation*}
Proceeding as in the previous theorem, we have the following set of equations
\begin{align}
(1+\lambda)a_2&=\frac{1}{2}B_1p_1\label{38}\\
2(1+2\lambda)a_3-(1+3\lambda)a_2^2&=\frac{1}{2}B_1\bigg(p_2-\frac{1}{2} p_1^2\bigg)+\frac{1}{4}B_2p_1^2\label{39}\\
3(1+3\lambda)a_4-(3+15\lambda)a_2a_3+(1+7\lambda)a_2^3&=\frac{1}{2}B_1\bigg(\frac{p_1^3}{4}-p_1p_2+p_3\bigg)\notag+\frac{1}{2}B_2p_1\bigg(p_2-\frac{p_1^2}{2}\bigg)+\\&\quad\quad\frac{1}{8}B_3p_1^3\label{40}\\
-(1+\lambda)a_2&=\frac{1}{2}B_1q_1\label{41}\\
-2(1+2\lambda)a_3+(3+5\lambda)a_2^2&=\frac{1}{2}B_1\bigg(q_2-\frac{1}{2} q_1^2\bigg)+\frac{1}{4}B_2q_1^2\label{42}\\
-3(1+3\lambda)a_4+(12+30\lambda)a_2a_3-(10+22\lambda)a_2^3 & =\frac{1}{2}B_1\bigg(\frac{q_1^3}{4}-q_1q_2+q_3\bigg)\notag+\frac{1}{2} B_2q_1\bigg(q_2-\frac{q_1^2}{2}\bigg)\\&\quad\quad+\frac{1}{8}B_3q_1^3.\label{43}
\end{align}
Equations (\ref{38}) and (\ref{41}) gives $p_1=-q_1$. Adding equations (\ref{39}) and (\ref{42}), and then substituting $a_2= B_1p_1/2(1+\lambda)$, we  get
\begin{equation}\label{44}
p_1^2= \frac{B_1(p_2+q_2)(1+\lambda)}{B_1^2+(1+\lambda)(B_1-B_2)}.
\end{equation}
Using the well known estimates $\vert p_2\vert \leq 2$ and $\vert q_2\vert \leq 2$, we get
\begin{equation}\label{73}
\vert p_1\vert\leq
\begin{cases}
 \dfrac{2\sqrt{(1+\lambda)B_1}}{\sqrt{\vert B_1^2+(1+\lambda)(B_1-B_2)\vert}} \quad \textsl{if} \quad (1+\lambda)B_1\leq \vert B_1^2+(1+\lambda)(B_1-B_2)\vert,\\
 2\quad\quad\quad\quad\quad\quad\quad\quad\quad\quad\quad\quad \textsl{if}\quad   (1+\lambda)B_1\geq \vert B_1^2+(1+\lambda)(B_1-B_2)\vert.
  \end{cases}
\end{equation}
Using equation (\ref{38}) and the above refined estimate for $\vert p_1\vert $, we have the following estimate for $\vert a_2\vert$:
\begin{equation*}
\vert a_2\vert \leq
\begin{cases}
\dfrac{B_1\sqrt{B_1}}{\sqrt{(1+\lambda)\vert B_1^2+(1+\lambda)(B_1-B_2)\vert}} \quad \textsl{if} \quad (1+\lambda)B_1\leq \vert B_1^2+(1+\lambda)(B_1-B_2)\vert,\\
\dfrac{B_1}{1+\lambda}\quad\quad\quad\quad\quad\quad\quad\quad\quad\quad \textsl{if}\quad   (1+\lambda)B_1\geq \vert B_1^2+(1+\lambda)(B_1-B_2)\vert.
\end{cases}
\end{equation*}
Also, equation (\ref{44}) gives \[p_2+q_2=\frac{(B_1^2+(1+\lambda)(B_1-B_2))p_1^2}{B_1(1+\lambda)}.\]
Thus, we have
\begin{equation}\label{51}
\vert p_2+q_2\vert \leq
\begin{cases}
4\quad\quad\quad\quad\quad\quad\quad\quad\quad\quad\quad\quad \textsl{if}\quad   (1+\lambda)B_1\leq \vert B_1^2+(1+\lambda)(B_1-B_2)\vert,\\
\dfrac{4\vert B_1^2+(1+\lambda)(B_1-B_2)\vert}{B_1(1+\lambda)} \quad \textsl{if} \quad (1+\lambda)B_1\geq \vert B_1^2+(1+\lambda)(B_1-B_2)\vert.
\end{cases}
\end{equation}
Now, to find an estimate for $\vert a_3\vert$, we subtract (\ref{42}) from (\ref{39}) and get
\begin{equation}\label{48}
a_3=a_2^2+\frac{B_1}{8(1+2\lambda)}(p_2-q_2).
\end{equation}
Using (\ref{38}) and then (\ref{44}) in (\ref{48}), we get
\begin{align}
a_3=& \frac{B_1}{8(1+2\lambda)(1+\lambda)(B_1^2+(B_1-B_2)(1+\lambda))}(((3+5\lambda)B_1^2+(1+\lambda)^2(B_1-B_2))p_2\nonumber\\&+((1+3\lambda)B_1^2-(1+\lambda)^2(B_1-B_2))q_2)\label{49}\\
=& \frac{B_1}{8(1+2\lambda)(1+\lambda)(B_1^2+(B_1-B_2)(1+\lambda))}(((1+3\lambda)B_1^2-(1+\lambda)^2(B_1-B_2))(p_2+q_2)\nonumber\\&+2((1+\lambda)B_1^2+(1+\lambda)^2(B_1-B_2))p_2).\label{50}
\end{align}
Using $\vert p_2\vert\leq 2$ and $\vert q_2\vert \leq 2$ in (\ref{49}), we have the first estimate for $\vert a_3\vert$ given by
\begin{equation}\label{52}
\begin{split}
\vert a_3\vert \leq &
\frac{B_1}{4(1+2\lambda)(1+\lambda)\vert B_1^2+(B_1-B_2)(1+\lambda)\vert}(\vert(3+5\lambda)B_1^2+(1+
\lambda)^2(B_1-B_2)\vert \\& +\vert(1+3\lambda)B_1^2-(1+\lambda)^2(B_1-B_2)\vert).
\end{split}
\end{equation}
Also using the estimate for $\vert p_2+q_2\vert $ from (\ref{51}) in (\ref{50}), we get another estimate for $\vert a_3\vert$ given by
\begin{equation}\label{53}
\vert a_3\vert \leq
\begin{cases}
\dfrac{B_1}{2(1+2\lambda)(1+\lambda)\vert B_1^2+(B_1-B_2)(1+\lambda)\vert}(\vert(1+3\lambda)B_1^2-(1+\lambda)^2(B_1-B_2)\vert\\\quad\quad\quad+\vert(1+\lambda)B_1^2+(1+\lambda)^2(B_1-B_2)\vert) \quad \textsl{if} \quad (1+\lambda)B_1\leq \vert B_1^2+(1+\lambda)(B_1-B_2)\vert,\\
\dfrac{1}{2(1+2\lambda)((1+\lambda)^2}(\vert (1+3\lambda)B_1^2-(1+\lambda)^2(B_1-B_2)\vert+B_1(1+\lambda)^2) \\\quad\quad\quad\quad\quad\quad\quad\quad\quad\quad\quad\quad\quad\quad\quad\quad \textsl{if} \quad (1+\lambda)B_1\geq \vert B_1^2+(1+\lambda)(B_1-B_2)\vert.
\end{cases}
\end{equation}
Using (\ref{52}) and (\ref{53}), desired estimate for $a_3$ follows.\\

\noindent Now adding equations (\ref{40}) and (\ref{43}), we get
\begin{equation}\label{54}
(9+15\lambda)(a_2a_3-a_2^3)= \frac{(B_2-B_1)}{2}p_1p_2+\frac{(B_2-B_1)}{2}q_1q_2+\frac{B_1}{2}(p_3+q_3).
\end{equation}
Substituting the value of $a_3$ from (\ref{48}) and using $p_1=-q_1$ in (\ref{54}), we get
\begin{equation}\label{58}
p_1(p_2-q_2)=\frac{8(1+2\lambda)(1+\lambda)B_1(p_3+q_3)}{(9+15\lambda)B_1^2-8(1+2\lambda)(1+\lambda)(B_2-B_1)}
\end{equation}
To find an estimate for $\vert a_4\vert$,  subtracting (\ref{43}) from (\ref{40}), we get
\begin{align}\label{72}
6(1+3\lambda)a_4 &=(15+45\lambda)a_2a_3-(11+29\lambda)a_2^3+\frac{(B_1+B_3-2B_2)}{4}p_1^3+\frac{B_2-B_1}{2}p_1p_2\notag\\&\quad+\frac{B_1-B_2}{2}q_1q_2+\frac{B_1}{2}(p_3-q_3)\notag\\
&=(9+15\lambda)(a_2a_3-a_2^2)+(6+30\lambda)a_2a_3-2(1+7\lambda)a_2^3+\frac{(B_1+B_3-2B_2)}{4}p_1^3\notag\\&\quad+\frac{B_2-B_1}{2}p_1p_2+\frac{B_1-B_2}{2}q_1q_2+\frac{B_1}{2}(p_3+q_3).
\end{align}
 We replace $(9+15\lambda)(a_2a_3-a_2^2)$ by the right hand side of (\ref{54}) and use the values of $a_2$ and $a_3$ from equations (\ref{38}) and (\ref{48}) in (\ref{72}), we get
\begin{equation*}
\begin{split}
6(1+3\lambda)a_4 &=\frac{(B_2-B_1)}{2}p_1p_2+\frac{(B_2-B_1)}{2}q_1q_2+\frac{B_1}{2}
(p_3+q_3)+(6+30\lambda)\frac{B_1p_1}{2(1+\lambda)}\\&\quad\times\bigg(\frac{B_1^2p_1^2}{4(1+\lambda)^2}+\frac{B_1}{8(1+2\lambda)}(p_2-q_2)\bigg)-\frac{2(1+7\lambda)B_1^3}{8(1+\lambda)^3}p_1^3\\&\quad+\frac{(B_1+B_3-2B_2)}{4}
p_1^3+\frac{B_2-B_1}{2}p_1p_2+\frac{B_1-B_2}{2}q_1q_2+\frac{B_1}{2}(p_3+q_3)\\
&= B_1p_3+p_1p_2(B_2-B_1)+\frac{1}{4}\bigg((B_1+B_3-2B_2)+\frac{2(1+4\lambda)B_1^3}{(1+\lambda)^3}\bigg)p_1^3\\&\quad+\frac{3{B_1}^2 (1+5 \lambda ) p_1 (p_2-q_2)}{8 (1+\lambda ) (1+2 \lambda)}.
\end{split}
\end{equation*}
Using (\ref{44}) in above equation, we get
\begin{align}
6(1+3\lambda)a_4 =& B_1p_3+p_1p_2(B_2-B_1)+\frac{B_1(1+\lambda)}{4(B_1^2+(1+\lambda)(B_1-B_2)}\bigg((B_1+B_3-2B_2)\notag\\&\quad+\frac{2(1+4\lambda)B_1^3}{(1+\lambda)^3}\bigg)p_1(p_2+q_2)+\frac{3{B_1}^2 (1+5 \lambda )}{8 (1+\lambda ) (1+2 \lambda)} p_1 (p_2-q_2)\label{57}\\
= & B_1p_3+p_1p_2\bigg((B_2-B_1)+\frac{3(1+5\lambda)B_1^2}{8(1+\lambda)(1+2\lambda)}+\frac{B_1(1+\lambda)}{4(B_1^2+(1+\lambda)(B_1-B_2))}\notag\\&\quad\bigg((B_1+B_3-2B_2)+\frac{2(1+4\lambda)B_1^3}{(1+\lambda)^3}\bigg)\bigg)+q_1q_2\bigg(-\frac{3(1+5\lambda)B_1^2}{8(1+\lambda)(1+2\lambda)}\notag\\&\quad+\frac{B_1(1+\lambda)}{4(B_1^2+(1+\lambda)(B_1-B_2))}\bigg((B_1+B_3-2B_2)+\frac{2(1+4\lambda)B_1^3}{(1+\lambda)^3}\bigg)\bigg)\label{56}.
\end{align}
Let
\begin{align}
 A_1=&(B_2-B_1)+\dfrac{3(1+5\lambda)B_1^2}{8(1+\lambda)(1+2\lambda)}+\dfrac{B_1(1+\lambda)}{4(B_1^2+(1+\lambda)(B_1-B_2))}\\&\notag\quad\bigg((B_1+B_3-2B_2)+\dfrac{2(1+4\lambda)B_1^3}{(1+\lambda)^3}\bigg),\\
&\text{and}\notag\\
C_1=&-\dfrac{3(1+5\lambda)B_1^2}{8(1+\lambda)(1+2\lambda)}+\dfrac{B_1(1+\lambda)}{4(B_1^2+(1+\lambda)(B_1-B_2))}\\&\notag\quad\bigg((B_1+B_3-2B_2)+\dfrac{2(1+4\lambda)B_1^3}{(1+\lambda)^3}\bigg).
\end{align}
Thus (\ref{56}) reduces to
\begin{equation}\label{55}
6(1+3\lambda)a_4 = B_1p_3+A_1p_1p_2+C_1p_1q_2
\end{equation}
Using the refined estimate for $\vert p_1\vert$ given by (\ref{73}) and the well known bounds $\vert p_i\vert\leq 2$ for $i=2,3$ $\vert q_2\vert \leq 2$ in (\ref{55}), we get
\begin{equation}\label{63}
\vert a_4\vert \leq \dfrac{1}{3(1+3\lambda)}
\begin{cases}
B_1+  \dfrac{2\sqrt{(1+\lambda)B_1}}{\sqrt{\vert B_1^2+(1+\lambda)(B_1-B_2)\vert}}( \vert A_1\vert+\vert C_1\vert)\quad \textsl{if} \quad (1+\lambda)B_1\leq \\\quad\quad\quad\quad\quad\quad\quad\quad\quad\quad\quad\quad\quad\quad\quad\quad\quad\quad\vert B_1^2+(1+\lambda)(B_1-B_2)\vert,\\
B_1+2(\vert A_1\vert +\vert C_1\vert ) \quad \textsl{if} \quad\quad\quad (1+\lambda)B_1\leq \vert B_1^2+(1+\lambda)(B_1-B_2)\vert.\\
\end{cases}
\end{equation}
To find another estimate for $|a_4|$, we rewrite equation (\ref{57}) as
\begin{align*}
6(1+3\lambda)a_4 &= B_1p_3+\frac{(B_2-B_1)}{2}p_1(p_2+q_2)+\frac{(B_2-B_1)}{2}p_1(p_2-q_2)\\&\quad+\frac{B_1(1+\lambda)}{4(B_1^2+(1+\lambda)(B_1-B_2))}\bigg((B_1+B_3-2B_2)+\frac{2(1+4\lambda)B_1^3}{(1+\lambda)^3}\bigg)p_1(p_2+q_2)\\&\quad+\frac{3{B_1}^2 (1+5 \lambda )}{8 (1+\lambda ) (1+2 \lambda)} p_1 (p_2-q_2).\\
&=  B_1p_3+\bigg(\frac{B_2-B_1}{2}+ \frac{B_1(1+\lambda)}{4(B_1^2+(1+\lambda)(B_1-B_2))}\bigg((B_1+B_3-2B_2)+\frac{2(1+4\lambda)B_1^3}{(1+\lambda)^3}\bigg)\bigg)\\&\quad  p_1(p_2+q_2)+\bigg(\frac{B_2-B_1}{2}+\frac{3{B_1}^2 (1+5 \lambda )}{8 (1+\lambda ) (1+2 \lambda)}\bigg) p_1 (p_2-q_2).
\end{align*}
Replacing $p_1(p_2-q_2)$  in above equation with the right hand side of (\ref{58}), we get
 \begin{align}
6(1+3\lambda)a_4 &= B_1p_3+\bigg(\frac{B_2-B_1}{2}+ \frac{B_1((B_1+B_3-2B_2)(1+\lambda)^3+2(1+4\lambda)B_1^3)}{4(1+\lambda)^2(B_1^2+(1+\lambda)(B_1-B_2))}\bigg)  p_1(p_2+q_2)\notag\\&\quad+\frac{B_1(4(1+\lambda)(1+2\lambda)(B_2-B_1)+3{B_1}^2 (1+5 \lambda ))}{(9+15\lambda)B_1^2-8(1+2\lambda)(1+\lambda)(B_2-B_1)}(p_3+q_3)\notag\\
&= B_1p_3\bigg(\frac{(12+30\lambda)B_1^2-4(1+\lambda)(1+2\lambda)(B_2-B_1)}{(9+15\lambda)B_1^2-8(1+2\lambda)(1+\lambda)(B_2-B_1)}\bigg)\notag\\&\quad+B_1q_3\bigg(\frac{(4(1+\lambda)(1+2\lambda)(B_2-B_1)+3{B_1}^2 (1+5 \lambda ))}{(9+15\lambda)B_1^2-8(1+2\lambda)(1+\lambda)(B_2-B_1)}\notag\\&\quad +\bigg(\frac{B_2-B_1}{2}+ \frac{B_1((B_1+B_3-2B_2)(1+\lambda)^3+2(1+4\lambda)B_1^3)}{4(1+\lambda)^2(B_1^2+(1+\lambda)(B_1-B_2))}\bigg)  p_1(p_2+q_2).\label{60}
\end{align}
Using the refined estimate for $|p_1|$ given by (\ref{73}) and the inequalities $\vert p_i\vert \leq 2$, $\vert q_i\vert \leq 2$ for $i=2,3$ in (\ref{60}), we get \\
whenever $(1+\lambda)B_1\leq \vert B_1^2+(1+\lambda)(B_1-B_2)\vert$, then
\begin{equation}\label{61}
\begin{split}
\vert a_4\vert \leq &\dfrac{1}{3(1+3\lambda)}\bigg(
B_1\bigg(\bigg\vert\frac{(12+30\lambda)B_1^2-4(1+\lambda)(1+2\lambda)(B_2-B_1)}{(9+15\lambda)B_1^2-8(1+2\lambda)(1+\lambda)(B_2-B_1)}\bigg\vert\\&+\bigg\vert(\frac{(4(1+\lambda)(1+2\lambda)(B_2-B_1)+3{B_1}^2 (1+5 \lambda ))}{(9+15\lambda)B_1^2-8(1+2\lambda)(1+\lambda)(B_2-B_1)}\bigg\vert\bigg) + \dfrac{2\sqrt{(1+\lambda)B_1}}{\sqrt{\vert B_1^2+(1+\lambda)(B_1-B_2)\vert}}\\&\quad\times\bigg(\bigg\vert B_2-B_1+ \frac{B_1((B_1+B_3-2B_2)(1+\lambda)^3+2(1+4\lambda)B_1^3)}{2(1+\lambda)^2(B_1^2+(1+\lambda)(B_1-B_2))}\bigg\vert\bigg)\bigg),
\end{split}
\end{equation}
and whenever $(1+\lambda)B_1\geq \vert B_1^2+(1+\lambda)(B_1-B_2)\vert$, then
\begin{equation}\label{62}
\begin{split}
\vert a_4\vert \leq &\dfrac{1}{3(1+3\lambda)}\bigg(
B_1\bigg(\bigg\vert\frac{(12+30\lambda)B_1^2-4(1+\lambda)(1+2\lambda)(B_2-B_1)}{(9+15\lambda)B_1^2-8(1+2\lambda)(1+\lambda)(B_2-B_1)}\bigg\vert\\&+\bigg\vert(\frac{(4(1+\lambda)(1+2\lambda)(B_2-B_1)+3{B_1}^2 (1+5 \lambda ))}{(9+15\lambda)B_1^2-8(1+2\lambda)(1+\lambda)(B_2-B_1)}\bigg\vert\bigg)\\& +2\bigg(\bigg\vert B_2-B_1+ \frac{B_1((B_1+B_3-2B_2)(1+\lambda)^3+2(1+4\lambda)B_1^3)}{2(1+\lambda)^2(B_1^2+(1+\lambda)(B_1-B_2))}\bigg\vert\bigg)\bigg).
\end{split}
\end{equation}
From (\ref{63}), (\ref{61}) and (\ref{62}), desired estimate for $\vert a_4\vert $ follows.
\end{proof}
\begin{cor}
Let $f\in CV_{\sigma}(\phi)$.
\begin{itemize}
\item[(a)]If $2B_1\leq \vert B_1^2+2(B_1-B_2)\vert$, then
\begin{align*}
\vert a_2\vert \leq &
\dfrac{B_1\sqrt{B_1}}{\sqrt{2\vert B_1^2+2(B_1-B_2)\vert}},\\
\vert a_3\vert \leq & \min  \frac{B_1}{6\vert B_1^2+2(B_1-B_2)\vert}\bigg\{ \vert 2B_1^2+(B_1-B_2)\vert +\vert B_1^2-(B_1-B_2)\vert,\\&\quad\quad\vert 2B_1^2-2(B_1-B_2)\vert+\vert B_1^2+2(B_1-B_2)\vert\bigg\},\intertext{and}
\vert a_4\vert \leq &\min \bigg\{\dfrac{1}{12}\bigg(B_1+  \dfrac{2\sqrt{2B_1}}{\sqrt{\vert B_1^2+2(B_1-B_2)\vert}}( \vert A_1\vert+\vert C_1\vert)\bigg),\\&\quad\quad\dfrac{1}{24}\bigg(
\dfrac{B_1}{2}\bigg(\bigg\vert\frac{7B_1^2-4(B_2-B_1)}{B_1^2-2(B_2-B_1)}\bigg\vert+\bigg\vert\frac{3{B_1}^2+4(B_2-B_1) }{B_1^2-2(B_2-B_1)}\bigg\vert\bigg)\\&\quad\quad + \dfrac{8\sqrt{2B_1}}{\sqrt{\vert B_1^2+2(B_1-B_2)\vert}}\bigg(\bigg\vert\frac{B_2-B_1}{2}+ \frac{B_1(8(B_1+B_3-2B_2)+10 B_1^3)}{16(B_1^2+2(B_1-B_2))}\bigg\vert\bigg)\bigg)\bigg\}.
\end{align*}
 \item[(b)]If  $2B_1\geq \vert B_1^2+2(B_1-B_2)\vert$, then
\begin{align*}
\vert a_2\vert \leq& \dfrac{B_1}{2},\\
\vert a_3\vert \leq& \min  \bigg\{\frac{B_1}{6\vert B_1^2+2(B_1-B_2)\vert}(\vert 2B_1^2+ (B_1-B_2)\vert  +\vert B_1^2-(B_1-B_2)\vert),\\&\quad\quad\dfrac{1}{6}(\vert B_1^2-(B_1-B_2)\vert+B_1)\bigg\}, \intertext{and}
\vert a_4\vert \leq &\min \bigg\{\dfrac{1}{12}\bigg(B_1+2(\vert A_1\vert +\vert C_1\vert )\bigg),\\&\quad\quad\dfrac{1}{24}\bigg(\frac{B_1}{2}\bigg(\bigg\vert\frac{7B_1^2-4(B_2-B_1)}{B_1^2-2(B_2-B_1)}\bigg\vert+\bigg\vert\frac{3{B_1}^2+4(B_2-B_1) }{B_1^2-2(B_2-B_1)}\bigg\vert\bigg)\\&\quad\quad +4\bigg(\bigg\vert B_2-B_1+ \frac{B_1(4(B_1+B_3-2B_2)+5B_1^3)}{4(B_1^2+2(B_1-B_2))}\bigg\vert\bigg)\bigg)\bigg\},
\end{align*}
\end{itemize}
 where \[A_1= \dfrac{4 B_1^4-B_1^3+B_1^2 B_2-6 B_1^2+12 B_1 B_2+2 B_1 B_3-8 B_2^2}{ 4 \left(B_1^2+2 B_1-2 B_2\right)}, \] and
\[C_1=\dfrac{B_1 \left(B_1^3-3 B_1^2+3 B_1 B_2+2 B_1-4 B_2+2 B_3\right)}{4 \left(B_1^2+2 B_1-2 B_2\right)}.\]
\end{cor}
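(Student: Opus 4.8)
The plan is to deduce this corollary as the special case $\lambda=1$ of the preceding theorem on the class $M_\sigma^\lambda(\varphi)$, so no genuinely new argument is required --- only a careful bookkeeping of constants. First I would note that the subordination defining $M^\lambda(\varphi)$, namely $\lambda\bigl(1+zf''(z)/f'(z)\bigr)+(1-\lambda)zf'(z)/f(z)\prec\varphi(z)$, reduces when $\lambda=1$ to $1+zf''(z)/f'(z)\prec\varphi(z)$, which is precisely the Ma--Minda convexity condition; imposing the same requirement on the analytic continuation $g$ of $f^{-1}$ then gives $M_\sigma^1(\varphi)=CV_\sigma(\varphi)$. Hence it suffices to put $\lambda=1$ everywhere in the theorem and simplify.

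I would then check that the two regimes correspond: the theorem's dichotomy $(1+\lambda)B_1\le|B_1^2+(1+\lambda)(B_1-B_2)|$ (and its reverse) becomes $2B_1\le|B_1^2+2(B_1-B_2)|$ (and its reverse), which is exactly the split into parts (a) and (b) here. For $|a_2|$ one reads off $1+\lambda=2$, so that $B_1\sqrt{B_1}/\sqrt{(1+\lambda)|B_1^2+(1+\lambda)(B_1-B_2)|}$ becomes $B_1\sqrt{B_1}/\sqrt{2|B_1^2+2(B_1-B_2)|}$ and the other branch becomes $B_1/2$. For $|a_3|$, using $1+2\lambda=3$, $1+\lambda=2$, $(1+\lambda)^2=4$, $3+5\lambda=8$, $1+3\lambda=4$, the prefactor $B_1/[2(1+2\lambda)(1+\lambda)|\cdots|]$ equals $B_1/[12|B_1^2+2(B_1-B_2)|]$, while a common factor $4$ (respectively $2$) drops out of the two expressions inside the braces; combining these yields the stated $B_1/[6|B_1^2+2(B_1-B_2)|]$ times the displayed quantities, and the second branch of (b) simplifies the same way using $2(1+2\lambda)(1+\lambda)^2=24$.

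For $|a_4|$ I would first evaluate the auxiliary constants at $\lambda=1$: one has $3(1+5\lambda)/[8(1+\lambda)(1+2\lambda)]=3/8$, $B_1(1+\lambda)/[4(B_1^2+(1+\lambda)(B_1-B_2))]=B_1/[2(B_1^2+2B_1-2B_2)]$, and $2(1+4\lambda)B_1^3/(1+\lambda)^3=5B_1^3/4$, so that
\[
A_1=(B_2-B_1)+\frac{3B_1^2}{8}+\frac{B_1}{2(B_1^2+2B_1-2B_2)}\Bigl((B_1+B_3-2B_2)+\frac{5B_1^3}{4}\Bigr);
\]
putting this over the common denominator $4(B_1^2+2B_1-2B_2)$ gives the closed form for $A_1$ recorded in the corollary, and $C_1$, being the same expression with the $(B_2-B_1)$ term deleted and $3B_1^2/8$ replaced by $-3B_1^2/8$, reduces likewise. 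Since $3(1+3\lambda)=12$, the prefactor $1/[3(1+3\lambda)]$ becomes $1/12$, and the first $|a_4|$ estimates of (a) and (b) follow at once. For the second $|a_4|$ estimate one substitutes $9+15\lambda=24$, $12+30\lambda=42$, $8(1+2\lambda)(1+\lambda)=48$, $4(1+\lambda)(1+2\lambda)=24$, $3(1+5\lambda)=18$, $(1+\lambda)^3=8$, $2(1+4\lambda)=10$, then cancels the common factor $6$ in the two rational expressions --- so that $24B_1^2-48(B_2-B_1)$ becomes $4(B_1^2-2(B_2-B_1))$ --- and pulls a factor $1/2$ out of the last term, rewriting the overall $1/12$ as $1/24$; this reproduces the displayed formulas. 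Taking the minimum of the two $|a_4|$ bounds in each regime completes the argument. The only delicate point is the arithmetic bookkeeping of these numerical coefficients and the common factors moved in and out of the absolute-value signs; conceptually nothing goes beyond the theorem.
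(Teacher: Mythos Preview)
Your proposal is correct and matches the paper's approach exactly: the paper states this corollary immediately after the theorem on $M_\sigma^\lambda(\varphi)$ without proof, since $CV_\sigma(\varphi)=M_\sigma^1(\varphi)$ and the result is just the $\lambda=1$ specialization. Your bookkeeping of the numerical coefficients (including the reduction of $A_1$ and $C_1$ to the displayed closed forms) is accurate.
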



\section{Estimate for the fifth coefficient}
Here we consider the class $ST_{\sigma}(\rho)$, $(0\leq \rho<1)$ of bi-starlike functions of order $\rho$. Note that from Remark \ref{74}, we have
\begin{equation*}
\vert a_2\vert \leq
\begin{cases}
\sqrt{2(1-\rho)}  \quad \quad (0\leq\rho\leq1/2)\\
2(1-\rho) \quad\quad \quad (1/2\leq \rho<1),
\end{cases}
\end{equation*}
\begin{equation*}
\vert a_3\vert \leq
\begin{cases}
2(1-\rho) \quad\quad\quad\quad \quad (0\leq \rho\leq 1/2)\\
(1-\rho)(3-2\rho)  \quad \quad (1/2\leq\rho<1),
\end{cases}
\end{equation*}
and
\begin{equation*}
\vert a_4\vert\leq
\begin{cases}
 \dfrac{2(1-\rho)}{3}(1+2\sqrt{2(1-\rho)})\quad  \quad (0\leq \rho \leq 1/2),\\
\dfrac{2(1-\rho)}{3}(1+4(1-\rho))\quad \quad\quad \quad (1/2\leq \rho <1).
\end{cases}
\end{equation*}
It is well known that for $f\in ST(\rho)$, $\vert a_n\vert \leq \dfrac{1}{(n-1)!}\prod _{k=2}^{n}(k-2\rho)$ for $n\geq 2 $. In particular, for $n=5$, $|a_5|\leq \dfrac{1}{24}(2-2\rho)(3-2\rho)(4-2\rho)(5-2\rho)$. For $f\in  ST_{\sigma}(\rho)$, we expect a smaller bound for the fifth coefficient. The bound is given by the following theorem.
 \begin{thm}
 Let $f(z)\in ST_{\sigma}(\rho)$. For $0\leq \rho \leq 1/2$, we have
\[ \vert a_5\vert \leq \frac{2}{3}(1-\rho)+\frac{3}{2}(1-\rho)^2+\frac{2}{3}\sqrt{2}(1-\rho)^{3/2}.\]
\end{thm}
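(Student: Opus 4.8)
The plan is to re-run the machinery from the proof of Theorem~\ref{vv} (case $\lambda=0$), carried one coefficient further. Since $ST_\sigma(\rho)=ST_\sigma\big((1+(1-2\rho)z)/(1-z)\big)$ one has $B_1=B_2=B_3=\cdots=2(1-\rho)$, and the subordination collapses to a clean identity: with $\varphi(z)=\frac{1+(1-2\rho)z}{1-z}$ and $r(z)=\frac{p(z)-1}{p(z)+1}$ one checks that $\varphi(r(z))=\rho+(1-\rho)p(z)$, so $f\in ST_\sigma(\rho)$ means precisely $\frac{zf'(z)}{f(z)}=\rho+(1-\rho)p(z)$ and $\frac{wg'(w)}{g(w)}=\rho+(1-\rho)q(w)$ for Carath\'eodory $p,q$ (with $g$ the continuation of $f^{-1}$). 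First I would expand $\frac{zf'(z)}{f(z)}$ through the $z^4$-term and $\frac{wg'(w)}{g(w)}$ through the $w^4$-term using the inverse series \eqref{29}, and equate coefficients. Besides the four relations already used in Theorem~\ref{vv} — which give $p_1=-q_1$, $a_2=(1-\rho)p_1$, $a_2^2=\frac{1-\rho}{2}(p_2+q_2)$, $p_1^2=\frac{p_2+q_2}{2(1-\rho)}$, $a_3-a_2^2=\frac{1-\rho}{4}(p_2-q_2)$ and $3a_4-3a_2a_3+a_2^3=(1-\rho)p_3$ — this produces the new fifth-order relation $4a_5-4a_2a_4-2a_3^2+4a_2^2a_3-a_2^4=(1-\rho)p_4$.

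Next I would eliminate $a_2,a_3,a_4$. From the fifth relation, $a_5=\tfrac{1-\rho}{4}p_4+a_2a_4+\tfrac12(a_3-a_2^2)^2-\tfrac14a_2^4$; substituting $a_2a_4=\tfrac13\big((1-\rho)^2p_1p_3+3a_2^2a_3-a_2^4\big)$ (from $a_4=\tfrac13((1-\rho)p_3+3a_2a_3-a_2^3)$ together with $a_2=(1-\rho)p_1$) cancels the $a_2^2a_3$-terms and leaves $a_5=\tfrac{1-\rho}{4}p_4+\tfrac{(1-\rho)^2}{3}p_1p_3+\tfrac12a_3^2-\tfrac1{12}a_2^4$. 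Finally, inserting $a_3=\tfrac{1-\rho}{4}(3p_2+q_2)$ and $a_2^2=\tfrac{1-\rho}{2}(p_2+q_2)$ yields the closed form
\[
a_5=\frac{1-\rho}{4}\,p_4+\frac{(1-\rho)^2}{3}\,p_1p_3+\frac{(1-\rho)^2}{96}\big(25p_2^2+14p_2q_2+q_2^2\big).
\]
Then I would estimate termwise: $|p_4|\le2$, $|p_3|\le2$, $25|p_2|^2+14|p_2||q_2|+|q_2|^2\le160$, and — exactly as in Theorem~\ref{vv}(a), which is where the hypothesis $0\le\rho\le1/2$ is used — the refined bound $|p_1|\le\sqrt{2/(1-\rho)}$ coming from $p_1^2=\frac{p_2+q_2}{2(1-\rho)}$. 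This gives $|a_5|\le\frac{1-\rho}{2}+\frac{2\sqrt2}{3}(1-\rho)^{3/2}+\frac{5}{3}(1-\rho)^2$, and since $1/2\le1-\rho\le1$ forces $(1-\rho)^2\le1-\rho$, the surplus is absorbed into the linear term: $\frac{1-\rho}{2}+\frac{5}{3}(1-\rho)^2=\frac{1-\rho}{2}+\frac32(1-\rho)^2+\frac16(1-\rho)^2\le\frac23(1-\rho)+\frac32(1-\rho)^2$, which is the claimed estimate.

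I expect the main obstacle to be purely computational: correctly producing the order-$4$ and order-$5$ coefficient identities for $g=f^{-1}$ (the $w^4$-coefficient of $wg'/g$ expressed through the $a_j$ is the messiest input), and then tracking the cancellations that collapse the degree-four-in-$a_j$ part of $a_5$ down to the single quadratic form $25p_2^2+14p_2q_2+q_2^2$ in $p_2,q_2$ — without this cancellation the naive bound produces a much larger $(1-\rho)^2$-coefficient. It is also worth noting that one must route the estimate through $p_2,q_2$ rather than bounding $|a_3|^2$ and $|a_2|^4$ separately via the already-known second and third coefficient estimates, since $a_2$ and $a_3$ are linked through $p_2+q_2$ and it is exactly this link that keeps the $(1-\rho)^2$-term controllable.
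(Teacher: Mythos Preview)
Your proposal is correct and follows essentially the same route as the paper: both reduce the subordination to $zf'/f=\rho+(1-\rho)p$ (and the analogue for $g$), compare coefficients through order four, use $p_1^2=(p_2+q_2)/(2(1-\rho))$ to get the refined bound $|p_1|\le\sqrt{2/(1-\rho)}$, and arrive at the identical closed form $a_5=\tfrac{1-\rho}{4}p_4+\tfrac{(1-\rho)^2}{3}p_1p_3+\tfrac{(1-\rho)^2}{96}(25p_2^2+14p_2q_2+q_2^2)$, hence $|a_5|\le\tfrac12(1-\rho)+\tfrac{2\sqrt2}{3}(1-\rho)^{3/2}+\tfrac53(1-\rho)^2$. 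Your final absorption step, using $(1-\rho)^2\le1-\rho$ to pass to the constants in the stated theorem, is a point the paper's own proof actually leaves implicit (its last displayed line is the sharper bound you obtain before absorbing).
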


\begin{proof}
 Let $f(z)\in ST_{\sigma}(\rho)$. Then by Definition \ref{75}, we get
\begin{equation}
\label{b}
\frac{zf'(z)}{f(z)}=\rho+(1-\rho)Q_{1}(z)
\end{equation}
  and
\begin{equation}
\label{c}
 \frac{w g'(w)}{g(w)}=\rho+(1-\rho)P_{1}(w)
\end{equation}
   respectively, where $Q_{1}$ and $P_{1}$ are analytic functions with positive real part in the unit disk $\mathbb{D}$. Let the functions $Q_1$ and $P_1$ have the series expansions \[Q_{1}(z)=1+c_1z+c_2z^2+\cdots\quad (z\in \mathbb{D})\] and \[P_{1}(w)=1+l_{1}w+l_{2}w^2+\cdots \quad (w\in \mathbb{D}).\]
Comparing the coefficients in equations (\ref{b}) and (\ref{c}), we get
\begin{gather}
a_2=(1-\rho)c_1,\label{d}\\
2a_3-a_2^2=(1-\rho)c_2,\label{e}\\
3a_4-3a_2a_3+a_2^3=(1-\rho)c_3\label{f}\\
4a_5-a_2^4+4a_2^2a_3-2a_3^2-4a_2a_4=(1-\rho)c_4 \label{g}
\end{gather}
and
\begin{gather}
-a_2=(1-\rho)l_1 \label{h}\\
3a_2^2-2a_3=(1-\rho)l_2\label{i}\\
-(10a_2^3-12a_2a_3+3a_4)=(1-\rho)l_3\label{jjj}\\
-4a_5+35a_2^4+10a_3^2+20a_2a_4-60a_2^2a_3=(1-\rho)l_4.\label{k}
\end{gather}
Addition of  (\ref{e}) and (\ref{i}) yields:
\begin{equation}
2a_2^2=(1-\rho)(c_2+l_2).
\end{equation}
Using (\ref{d}), we have
\begin{equation}\label{47}
 c_1^2=\frac{c_2+l_2}{2(1-\rho)}.
 \end{equation}
  By applying the inequalities $\vert c_2\vert \leq 2$ and $\vert l_2\vert\leq 2$ in (\ref{47}), we get \[\vert c_1\vert\leq\sqrt{\dfrac{2}{(1-\rho)}}  .\]
Using equations (\ref{d}), (\ref{e}) and (\ref{i}), we get
\begin{equation}
\label{bb}
a_3=(1-\rho)^2c_1^2+\frac{(1-\rho)}{4}(c_2-l_2).
\end{equation}
From  equations (\ref{f}), (\ref{jjj}) and (\ref{47}), we have
\begin{equation}
\label{aaa}
6a_4=2(1-\rho)c_3+\frac{7(1-\rho)^2}{2}c_1c_2+\frac{(1-\rho)^2}{2}c_1l_2.
\end{equation}
From equation (\ref{g}), we have
 \[4 a_5= (1-\rho) c_4+a_2^4-4a_2^2a_3+2a_3^2+4a_2a_4.\]
 Now substituting the values of $a_2$, $a_3$ and $a_4$ from (\ref{d}), (\ref{bb}) and (\ref{aaa}) respectively, we have \\
\begin{align*}   4a_5&= (1-\rho)c_4+((1-\rho)^2c_1^2+\frac{(1-\rho)}{4}(c_2-l_2))(1-\rho)c_2+\frac{1}{6}c_1(1-\rho)(2(1-\rho)c_3\\
&\quad{}+\frac{7(1-\rho)^2}{2}c_1c_2+\frac{(1-\rho)^2}{2}c_1l_2)+(1-\rho)^2c_1c_3.\end{align*}
On simplification, we get
\[4a_5= (1-\rho)c_4+\frac{19}{12} c_1^2c_2(1-\rho)^3+\frac{4}{3}c_1c_3(1-\rho)^2+\frac{1}{12}(1-\rho)^3c_1^2l_2+\frac{1}{4}(1-\rho)^2c_2(c_2-l_2).\]
Using equation (\ref{47}) in the above expression, we get
\[4a_5=(1-\rho)c_4+\frac{25}{24}(1-\rho)^2c_2^2+\frac{7}{12}(1-\rho)^2c_2l_2+\frac{1}{24}l_2^2(1-\rho)+\frac{4}{3}(1-\rho)^2c_1c_3.\]
Applying the inequalities $\vert c_2\vert\leq 2$, $\vert c_3\vert \leq 2$ and $\vert l_2\vert \leq 2$, we have
\[4\vert a_5\vert \leq 2(1-\rho)+\frac{20}{3}(1-\rho)^2+\frac{8}{3}(1-\rho)^2\vert c_1\vert.\]
Finally using estimate for $\vert c_1\vert $, we get
\[\vert a_5\vert\leq \frac{1}{2}(1-\rho)+\frac{5}{3}(1-\rho)^2+\frac{2\sqrt{2}}{3}(1-\rho)^{3/2}.\]
\end{proof}
\begin{cor}
Let $f(z)=z+\sum_{n=2}^{\infty}a_n z^n$, $(z\in \mathbb{D})$ be bi-starlike function. Then $\vert a_5\vert \leq \dfrac{13}{6}+\dfrac{2\sqrt{2}}{3}\simeq 3.10947. $
\end{cor}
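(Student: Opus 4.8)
The plan is to read off this bound as the $\rho=0$ specialization of the preceding theorem. First I would note that an unqualified bi-starlike function is exactly a bi-starlike function of order zero: the Ma--Minda generator for order $\rho$ is $\varphi(z)=(1+(1-2\rho)z)/(1-z)$, which at $\rho=0$ becomes the standard starlike generator $(1+z)/(1-z)$, so that the class of bi-starlike functions coincides with $ST_{\sigma}(0)$, i.e.\ the subclass of $\sigma$ for which both $f$ and its inverse continuation $g$ are starlike.

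Next I would verify that $\rho=0$ lies in the admissible range $0\le\rho\le 1/2$ required in the hypothesis of the preceding theorem, so its conclusion
\[\vert a_5\vert\le\frac{2}{3}(1-\rho)+\frac{3}{2}(1-\rho)^2+\frac{2}{3}\sqrt{2}\,(1-\rho)^{3/2}\]
is available for the class $ST_{\sigma}(0)$. Substituting $\rho=0$ sends each factor $(1-\rho)$ to $1$ and yields
\[\vert a_5\vert\le\frac{2}{3}+\frac{3}{2}+\frac{2\sqrt{2}}{3}=\frac{13}{6}+\frac{2\sqrt{2}}{3},\]
using $\tfrac23+\tfrac32=\tfrac{4+9}{6}=\tfrac{13}{6}$. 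A direct numerical evaluation then gives $\tfrac{13}{6}+\tfrac{2\sqrt2}{3}\simeq 3.10947$, which is the asserted estimate.

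There is no genuine obstacle here: the entire content of the corollary is the single substitution $\rho=0$ into the bound already proved. The only point meriting a sentence of justification is the identification of the plain bi-starlike class with $ST_{\sigma}(0)$, and this is immediate from the definition of $ST_{\sigma}(\rho)$ recorded earlier in the paper.
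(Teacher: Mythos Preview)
Your proposal is correct and matches the paper's own treatment: the corollary is stated immediately after the theorem with no separate proof, so the intended argument is precisely the specialization $\rho=0$ you describe. The identification of ``bi-starlike'' with $ST_{\sigma}(0)$ and the arithmetic $\tfrac{2}{3}+\tfrac{3}{2}=\tfrac{13}{6}$ are both correct.
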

Now we consider the class $SS_{\sigma}(\beta)$ of strongly bi-starlike fubctions of order $\beta$.
Ali and Singh \cite{rosi} proved that if $f$ is srongly starlike function of order $\beta $, then
\begin{equation}\label{67}
\vert a_5\vert \leq
\begin{cases}
\dfrac{\beta^2}{9}(38\beta^2+7)\quad\quad 38\beta^3-30\alpha^2+16\alpha\geq \dfrac{9}{2},\\
\dfrac{\beta}{2} \quad\quad\quad\quad\quad\quad 228\beta^4-194\beta^3+2\beta^2+39\beta-9\leq 0.
\end{cases}
\end{equation}

\noindent We prove the following:
\begin{thm}
Let the function $f\in SS_\sigma(\beta)$. Then for $1/2\leq \beta\leq 1$,
\[ \vert a_5\vert\leq \frac{\beta}{9} \left(30 \beta ^2-21 \beta+9 +\frac{ \left(38 \beta ^2-30 \beta +7\right) \beta }{(\beta +1)^4}+\frac{3 (7 \beta -3)}{\sqrt{\beta +1}}\right).\]
\end{thm}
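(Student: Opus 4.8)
The plan is to carry the coefficient comparison from the proof of Theorem~\ref{vv} one order higher. Here $f,g\in ST(\varphi)$ with
\[
\varphi(z)=\Big(\tfrac{1+z}{1-z}\Big)^{\beta}=1+2\beta z+2\beta^{2}z^{2}+\tfrac{2}{3}\beta(2\beta^{2}+1)z^{3}+\tfrac{2}{3}\beta^{2}(\beta^{2}+2)z^{4}+\cdots,
\]
so $B_{1}=2\beta$, $B_{2}=2\beta^{2}$, $B_{3}=\tfrac{4}{3}\beta^{3}+\tfrac{2}{3}\beta$, $B_{4}=\tfrac{2}{3}\beta^{2}(\beta^{2}+2)$ and $B_{1}^{2}+B_{1}-B_{2}=2\beta(\beta+1)$. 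As in that proof, introduce Schwarz functions $r,s$ with $zf'(z)/f(z)=\varphi(r(z))$ and $wg'(w)/g(w)=\varphi(s(w))$, and the functions $p=(1+r)/(1-r)=1+\sum_{n\ge1}p_{n}z^{n}$, $q=(1+s)/(1-s)=1+\sum_{n\ge1}q_{n}w^{n}$ of positive real part, so that $|p_{n}|\le2$ and $|q_{n}|\le2$. Comparing the coefficients of $z,z^{2},z^{3},z^{4}$ in the two identities --- using the inverse expansion~\eqref{29} to express the Taylor coefficients of $g=f^{-1}$ through $a_{2},\dots,a_{5}$ --- produces eight relations; the first six are the $\lambda=0$ specializations of \eqref{j}--\eqref{68}, and two new ones arise from the $z^{4}$- and $w^{4}$-coefficients.

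From the $\lambda=0$ case of the computations in Theorem~\ref{vv} I already have $p_{1}=-q_{1}$, $p_{1}^{2}=(p_{2}+q_{2})/(\beta+1)$ (hence the refined estimates $|p_{1}|\le2/\sqrt{\beta+1}$ and $|a_{2}|\le2\beta/\sqrt{\beta+1}$, as already recorded for $SS_{\sigma}(\beta)$), $a_{3}=\beta^{2}p_{1}^{2}+\tfrac{\beta}{4}(p_{2}-q_{2})$ from \eqref{qq}, $6a_{4}=2\beta p_{3}+Ap_{1}p_{2}+Cp_{1}q_{2}$ with $A,C$ the $\lambda=0$ forms of the constants in Corollary~\ref{yy}, and $p_{1}(p_{2}-q_{2})=\tfrac{4}{5\beta+4}(p_{3}+q_{3})$ from \eqref{diff}. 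Adding the two new degree-$4$ relations makes the $a_{5}$-terms cancel --- it only yields a relation among $a_{2},a_{3},a_{4}$ --- so $a_{5}$ must be recovered by subtracting them, which gives
\[
8a_{5}=36a_{2}^{4}-64a_{2}^{2}a_{3}+12a_{3}^{2}+24a_{2}a_{4}+\big(E_{f}-E_{g}\big),
\]
where $E_{f}$ is the coefficient of $z^{4}$ in $\varphi(r(z))$ and $E_{g}$ is that of $w^{4}$ in $\varphi(s(w))$.

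Then I would substitute the above formulas for $a_{2},a_{3},a_{4}$ into the right-hand side, expand $E_{f}$ and $E_{g}$ through \eqref{cc}--\eqref{dd} carried to order $z^{4}$ (with the specific $B_{i}$), and reduce: every monomial containing $p_{1}^{2}$, $p_{1}^{4}$ or $p_{1}^{2}p_{2}$ is removed via $p_{1}^{2}=(p_{2}+q_{2})/(\beta+1)$, and every $p_{1}(p_{2}-q_{2})$ via $p_{1}(p_{2}-q_{2})=\tfrac{4}{5\beta+4}(p_{3}+q_{3})$, after which $a_{5}$ is a linear combination of $p_{4}-q_{4}$, the products $p_{1}p_{3}$ and $p_{1}q_{3}$, and $p_{2}^{2},p_{2}q_{2},q_{2}^{2}$, with coefficients rational in $\beta$ (the denominators being powers of $\beta+1$). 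Applying $|p_{4}-q_{4}|\le4$, $|p_{2}^{2}|,|p_{2}q_{2}|,|q_{2}^{2}|\le4$, and $|p_{1}p_{3}|,|p_{1}q_{3}|\le2|p_{1}|\le4/\sqrt{\beta+1}$, and then collecting powers of $\beta$, should produce the asserted bound; the hypothesis $1/2\le\beta\le1$ is precisely what makes the absolute values removed in this last step have the signs used above (in particular the coefficient $7\beta-3$ of the surviving $p_{1}\cdot(\text{cubic})$ terms is nonnegative).

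The step I expect to be the real obstacle is the one just described: expanding $E_{f}$ and $E_{g}$ to fourth order with $B_{4}\ne0$, inserting the cubic-and-lower expressions for $a_{2},a_{3},a_{4}$, and being disciplined about which monomials in the $p_{n},q_{n}$ are reducible --- and may therefore be consolidated to sharpen the estimate --- versus which must simply be dominated termwise using $|p_{n}|,|q_{n}|\le2$. A miscount there shifts the constants and breaks the match with the stated closed form.
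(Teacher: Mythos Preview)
Your strategy is sound in outline but diverges from the paper's at the two places that actually determine the closed form, and for that reason your termwise estimation will not reproduce the stated constants.

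First, the paper does \emph{not} only subtract the two degree-$4$ relations; it also adds them and substitutes the resulting identity back into the difference. Combined with the direct use of the lower-order relations in the grouped forms $2a_{3}-a_{2}^{2}$ and $a_{2}^{3}-3a_{2}a_{3}+3a_{4}$, this collapses everything to an expression in the \emph{$f$-side coefficients alone}:
\[
4a_{5}=\beta c_{4}+\tfrac{\beta(2\beta-1)}{2}c_{2}^{2}+\tfrac{\beta(7\beta-3)}{3}c_{1}c_{3}
+\tfrac{\beta(10\beta^{2}-10\beta+3)}{12}\bigl(4c_{1}^{2}c_{2}-c_{1}^{4}\bigr)
+\tfrac{\beta^{2}(38\beta^{2}-30\beta+7)}{36}c_{1}^{4},
\]
obtained by substituting $l_{2}=(1+\beta)c_{1}^{2}-c_{2}$, i.e.\ eliminating $l_{2}$ in favour of $c_{1}^{2}$ --- exactly the \emph{opposite} direction to your plan of eliminating $p_{1}^{2}$ in favour of $p_{2}+q_{2}$. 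Your reduction $p_{1}(p_{2}-q_{2})=\tfrac{4}{5\beta+4}(p_{3}+q_{3})$ is never used here.

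Second, and decisively, the paper does not bound this expression termwise with $|c_{n}|\le2$ alone. The term $4c_{1}^{2}c_{2}-c_{1}^{4}$ is kept as a block and estimated via the Carath\'eodory second-coefficient inequality $|4c_{1}^{2}c_{2}-c_{1}^{4}|\le16$ (equivalently $|c_{1}|^{2}\,|4c_{2}-c_{1}^{2}|\le|c_{1}|^{2}(8-|c_{1}|^{2})\le16$), while the remaining $c_{1}^{4}$ is bounded with the refined estimate $|c_{1}|\le2/\sqrt{1+\beta}$. It is precisely this grouping that produces the coefficients $30\beta^{2}-21\beta+9$ and $\beta(38\beta^{2}-30\beta+7)/(1+\beta)^{2}$ in the statement. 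If instead you push $p_{1}^{2}\to(p_{2}+q_{2})/(1+\beta)$ and then bound $|p_{2}^{2}|,|p_{2}q_{2}|,|q_{2}^{2}|\le4$ separately, the resulting constants are different (indeed smaller at $\beta=1$), so your final answer will not match the theorem as stated. The ``sign'' hypothesis $1/2\le\beta\le1$ in the paper is used for $2\beta-1\ge0$ and $7\beta-3\ge0$; the quantity $38\beta^{3}-60\beta^{2}+37\beta-9$ that would appear in your decomposition changes sign on $[1/2,1]$, so termwise estimation there is delicate in any case.

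In short: your scaffold would give \emph{a} bound, but not \emph{this} bound. To recover the stated form you need both the add-and-subtract step that eliminates the $g$-side variables and the block estimate $|4c_{1}^{2}c_{2}-c_{1}^{4}|\le16$.
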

\begin{proof}
Let $f(z)\in SS_\sigma(\beta)$. Then by Definition \ref{75}, we have
\begin{equation}
\label{1}
\frac{zf'(z)}{f(z)}=[Q(z)]^\beta
\end{equation}
  and
\begin{equation}
\label{2}
 \frac{w g'(w)}{g(w)}=[P(w)]^\beta
\end{equation}
   respectively, where $P$ and $Q$ are analytic functions with positive real part in the unit disk $\mathbb{D}$. Let \[Q(z)=1+c_1z+c_2z^2+\cdots\quad (z\in \mathbb{D})\] and \[P(w)=1+l_{1}w+l_{2}w^2+\cdots \quad (w\in \mathbb{D}).\]
Comparing the coefficients of both sides in equation (\ref{1}), we get
\begin{gather}
a_2=\beta c_1\label{3}\\
2a_3-a_2^2=\beta c_2+\frac{\beta(\beta-1)}{2}c_1^2\label{4}\\
3a_4-3a_2a_3+a_2^3=\beta c_3+\beta(\beta-1)c_1c_2+\frac{\beta(\beta-1)(\beta-2)}{6}c_1^3\label{5}\\
4a_5-a_2^4+4a_2^2a_3-2a_3^2-4a_2a_4=\beta c_4+\frac{\beta(\beta-1)}{2}(c_2^2+2c_1c_3)+\frac{\beta(\beta-1)(\beta-2)}{6}3c_1^2c_2\nonumber+\\\quad\quad\quad\frac{\beta(\beta-1)(\beta-2)(\beta-3)}{24}c_1^4\label{6}.
\end{gather}
Comparing the coefficients of both sides in equation (\ref{2}), we get
\begin{gather}
-a_2=\beta l_1\label{7}\\
3a_2^2-2a_3=\beta l_2+\frac{\beta(\beta-1)}{2}l_1^2\label{8}\\
-(10a_2^3-12a_2a_3+3a_4)=\beta l_3+\beta\beta-1)l_1l_2+\frac{\beta(\beta-1)(\beta-2)}{6}l_1^3\label{9}\\
-4a_5+35a_2^4+10a_3^2+20a_2a_4-60a_2^2a_3=\beta l_4+\frac{\beta(\beta-1)}{2}(l_2^2+2l_1l_3)+\frac{\beta(\beta-1)(\beta-2)}{6}3l_1^2l_2\nonumber+\\\quad\quad\quad\quad\quad\quad\frac{\beta(\beta-1)(\beta-2)(\beta-3)}{24}l_1^4\label{10}.
\end{gather}
Equations (\ref{3}) and (\ref{7}) yields $c_1=-l_1$.\\
Adding equations (\ref{4}) and (\ref{8}) and using $c_1=-l_1$, we get \[2a_2^2=\beta(c_2+l_2)+\beta(\beta-1)c_1^2.\] Substituting the value of $a_2$ from  equation (\ref{3}), we have
\begin{equation}\label{15}
c_1^2=\frac{c_2+l_2}{1+\beta}.
\end{equation}
 Using the well known inequalities $\vert c_2\vert \leq 2$ and $\vert l_2\vert \leq 2$, we have
 \begin{equation}\label{16}
 \vert c_1\vert \leq \frac{2}{\sqrt{1+\beta}}.
\end{equation}
Also, subtracting equation (\ref{8}) from (\ref{4}) and using $c_1=-l_1$ with equation (\ref{3}), we have
\begin{equation}\label{11}
a_3=\beta^2c_1^2+\frac{1}{4}\beta(c_2-l_2).
\end{equation}
Adding equations (\ref{5}) and (\ref{9}) and using $l_1=-c_1$, we have
\begin{equation}\label{12}
-9a_2^3+9a_2a_3=\beta(c_3+l_3)+\beta(\beta-1)c_1(c_2-l_2).
\end{equation}
Subtracting equation (\ref{9}) from equation (\ref{5}), we have
\begin{equation*}
\begin{split}
6a_4& =-11a_2^3+15a_2a_3+\beta(c_3-l_3)+\beta(\beta-1)c_1(c_2+l_2)+\frac{\beta(\beta-1)(\beta-2)}{3}c_1^3\\
& =-9a_2^3+9a_2a_3-2a_2^3+6a_2a_3+\beta(c_3-l_3)+\beta(\beta-1)c_1(c_2+l_2)+\frac{\beta(\beta-1)(\beta-2)}{3}c_1^3.
 \end{split}
\end{equation*}
Using equations (\ref{11}) and (\ref{12}), we have
\begin{equation*}
\begin{split}
6a_4 &=\beta(c_3+l_3)+\beta(\beta-1)c_1(c_2-l_2)-2\beta^3c_1^3+6\beta c_1\bigg(\beta^2c_1^2+\dfrac{1}{4}\beta(c_2-l_2)\bigg)\\&\quad \quad+\beta(c_3-l_3)+\beta(\beta-1)c_1(c_2+l_2)+\dfrac{\beta(\beta-1)(\beta-2)}{3}c_1^3\\
&=2\beta c_3+\dfrac{\beta(5\beta-2)}{2}c_1(c_2-l_2)+\beta(\beta-1)c_1(c_2+l_2)+\dfrac{13\beta^3-3\beta^2+2\beta}{3}c_1^3.
\end{split}
\end{equation*}
On replacing $c_1^2$ by $(c_2+l_2)/(1+\beta)$, we finally have
\begin{align}
6a_4&=\beta\bigg(2c_3+\frac{(5\beta-2)}{2}c_1(c_2-l_2)+\frac{16\beta^2-3\beta-1}{3(1+\beta)}c_1(c_2+l_2)\bigg)\notag\\
&=\beta\bigg(2c_3+\frac{47 \beta ^2+3 \beta -8}{6 (\beta +1)}c_1c_2+\frac{17 \beta ^2-15 \beta +4}{6 (\beta +1)}c_1l_2\bigg)\label{76}.
\end{align}
Now to find an estimate for $\vert a_5\vert $, we first express $a_5$ in terms of first four coefficients of $f(z)$. For this we subtract equation (\ref{10}) from equation (\ref{6}) and use the fact that $c_1=-l_1$ and get
\begin{eqnarray}
8a_5&= 36a_2^4-64a_2^2a_3+12a_3^2+24a_2a_4+\beta(c_4-l_4)+\dfrac{\beta(\beta-1)}{2}(c_2^2-l_2^2+2c_1(c_3+l_3))\nonumber\\&\quad \quad+\dfrac{\beta(\beta-1)(\beta-2)}{2}c_1^2(c_2-l_2)\nonumber\\
&=34a_2^4+8a_3^2+16a_2a_4-56a_2^2a_3+2a_2^4-8a_2^2a_3+4a_3^2+8a_2a_4+\beta(c_4-l_4)\nonumber\\&\quad\quad +\dfrac{\beta(\beta-1)}{2}(c_2^2-l_2^2+2c_1(c_3+l_3))+\dfrac{\beta(\beta-1)(\beta-2)}{2}c_1^2(c_2-l_2)\label{13}.
\end{eqnarray}
Also, adding equations (\ref{6}) and (\ref{10}), we have
\begin{equation}\label{14}
\begin{split}
34a_2^4+8a_3^2+16a_2a_4-56a_2^2a_3&=\beta(c_4+l_4)+\frac{\beta(\beta-1)}{2}(c_2^2+l_2^2+2c_1(c_3-l_3))\\&\quad\quad+\frac{\beta(\beta-1)(\beta-2)}{2}c_1^2(c_2+l_2)+\frac{\beta(\beta-1)(\beta-2)(\beta-3)}{12}c_1^4.
\end{split}
\end{equation}
Using equation (\ref{14}) in (\ref{13}), we have
\begin{equation*}
\begin{split}
8a_5&=2\beta c_4+\beta(\beta-1)(c_2^2+2c_1c_3)+\beta(\beta-1)(\beta-2)c_1^2c_2+\frac{\beta(\beta-1)(\beta-2)(\beta-3)}{12}c_1^4\\&\quad\quad+2a_2^4-8a_2^2a_3+4a_3^2+8a_2a_4.
\end{split}
\end{equation*}
This gives
\begin{equation*}
\begin{split}
4a_5&=\beta c_4+\frac{\beta(\beta-1)}{2}(c_2^2+2c_1c_3)+\frac{\beta(\beta-1)(\beta-2)}{2}c_1^2c_2+\frac{\beta(\beta-1)(\beta-2)(\beta-3)}{24}c_1^4\\&\quad\quad+a_2^4-4a_2^2a_3+2a_3^2+4a_2a_4.\\
&=\beta c_4+\frac{\beta(\beta-1)}{2}(c_2^2+2c_1c_3)+\frac{\beta(\beta-1)(\beta-2)}{2}c_1^2c_2+\frac{\beta(\beta-1)(\beta-2)(\beta-3)}{24}c_1^4\\&\quad\quad+a_3(2a_3-a_2^2)+a_2a_4+a_2(a_2^3-3a_2a_3+3a_4)\\
&=\beta c_4+\frac{\beta(\beta-1)}{2}(c_2^2+2c_1c_3)+\frac{\beta(\beta-1)(\beta-2)}{2}c_1^2c_2+\frac{\beta(\beta-1)(\beta-2)(\beta-3)}{24}c_1^4\\&\quad\quad+ (\beta^2c_1^2+\frac{1}{4}\beta(c_2-l_2))(\beta c_2+\frac{\beta(\beta-1)}{2}c_1^2)+\frac{1}{6}\beta^2 c_1\bigg(2c_3+\frac{47 \beta ^2+3 \beta -8}{6 (\beta +1)}c_1c_2\\&\quad+\frac{17 \beta ^2-15 \beta +4}{6 (\beta +1)}c_1l_2\bigg)+\beta c_1\bigg(\beta c_3+\beta(\beta-1)c_1c_2+\frac{\beta(\beta-1)(\beta-2)}{6}c_1^3\bigg).
\end{split}
\end{equation*}
After simplification, we get
\begin{equation*}
\begin{split}
4a_5&= \beta c_4+\frac{\beta(3\beta-2)}{4}c_2^2+\frac{\beta(7\beta-3)}{3}c_1c_3+\bigg(\frac{\beta(283\beta^3+6 \beta^2-133\beta+72)}{72(1+\beta)}\bigg)c_1^2c_2\\&\quad\quad+\frac{\beta^2(25\beta^2-30\beta+17)}{72(1+\beta)}c_1^2l_2-\frac{\beta^2}{4}c_2l_2+\frac{\beta(\beta-1)(17\beta^2-13\beta+6)}{24}c_1^4.
\end{split}
\end{equation*}
Substituting $l_2=(1+\beta)c_1^2-c_2$ from equation (\ref{15}), the above equation reduces to
\begin{equation*}
\begin{split}
4a_5 &=\beta c_4+\frac{\beta(2\beta-1)}{2}c_2^2+\frac{\beta(7\beta-3)}{3}c_1c_3+\frac{1}{3} \beta  \left(10 \beta ^2-10 \beta +3\right)c_1^2c_2\\&\quad+\frac{1}{36} \beta  \left(38 \beta ^3-60 \beta ^2+37 \beta -9\right)c_1^4\\
&= \beta c_4+\frac{\beta(2\beta-1)}{2}c_2^2+\frac{\beta(7\beta-3)}{3}c_1c_3+\frac{ \beta  \left(10 \beta ^2-10 \beta +3\right)}{12}(4c_1^2c_2-c_1^4)\\&\quad+\frac{\beta  \left(38 \beta ^3-30 \beta ^2+7 \beta \right)}{36} c_1^4.
\end{split}
\end{equation*}
For $1/2\leq \beta<1$, using the estimate for $\vert c_1\vert$ given by (\ref{16}) and the inequality  $\vert 4c_1^2c_2-c_1^4\vert\leq 16$, we get
\[ \vert a_5\vert\leq \frac{\beta}{9} \left(30 \beta ^2-21 \beta+9 +\frac{ \left(38 \beta ^2-30 \beta +7\right) \beta }{(\beta +1)^4}+\frac{3 (7 \beta -3)}{\sqrt{\beta +1}}\right).\qedhere\]
\end{proof}
Note that from (\ref{67}), for $f\in SS(1/2)$, $\vert a_5\vert\leq \frac{33}{72}\simeq 0.45833.$  We have the following:
\begin{cor}
Let $f(z)$ given by (\ref{aa}) be in the class $SS_{\sigma}(1/2)$. Then \[\vert a_5\vert \leq
 \frac{1}{36} \left(\frac{332}{27}+\sqrt{6}\right)\simeq 0.409605.\]
\end{cor}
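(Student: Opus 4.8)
\noindent\emph{Proof idea.} The plan is to run the Carath\'eodory-function scheme used in the paper for $a_{2},a_{3},a_{4}$ one step further. Since $f\in SS_{\sigma}(\beta)$, by Definition~\ref{75} (taking $\varphi(z)=((1+z)/(1-z))^{\beta}$) one may write $zf'(z)/f(z)=[Q(z)]^{\beta}$ and $wg'(w)/g(w)=[P(w)]^{\beta}$, where $g=f^{-1}$ and $Q(z)=1+c_{1}z+c_{2}z^{2}+\cdots$, $P(w)=1+l_{1}w+l_{2}w^{2}+\cdots$ are analytic with positive real part, so $|c_{n}|\le 2$ and $|l_{n}|\le 2$. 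First I would expand $[Q(z)]^{\beta}$ and $[P(w)]^{\beta}$ through the fourth power, expand $zf'(z)/f(z)$ and $wg'(w)/g(w)$ via $f(z)=z+\sum a_{n}z^{n}$ and the Taylor-Maclaurin series of $f^{-1}$, and equate coefficients of $z,\dots,z^{4}$ and $w,\dots,w^{4}$. This produces eight relations tying $a_{2},a_{3},a_{4},a_{5}$ to $c_{1},\dots,c_{4},l_{1},\dots,l_{4}$.

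The second stage is pure elimination, exploiting the symmetry of the $c$- and $l$-relations. The first-order pair forces $c_{1}=-l_{1}$; adding the second-order pair and using $a_{2}=\beta c_{1}$ gives the identity $c_{1}^{2}=(c_{2}+l_{2})/(1+\beta)$ and hence the refined estimate $|c_{1}|\le 2/\sqrt{1+\beta}$, while subtracting them gives $a_{3}=\beta^{2}c_{1}^{2}+\tfrac14\beta(c_{2}-l_{2})$. Treating the third-order pair the same way (add to obtain a relation for $a_{2}a_{3}-a_{2}^{3}$, subtract to isolate $a_{4}$) and substituting the formulas for $a_{2},a_{3}$ and the identity for $c_{1}^{2}$ gives a closed formula for $6a_{4}$ in $c_{1},c_{2},c_{3},l_{2}$ only. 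Finally I would add and subtract the fourth-order pair, substitute the already obtained expressions for $a_{2},a_{3},a_{4}$, solve for $4a_{5}$, and replace $l_{2}$ by $(1+\beta)c_{1}^{2}-c_{2}$ everywhere; the result should collapse to
\begin{align*}
4a_{5} &= \beta c_{4}+\frac{\beta(2\beta-1)}{2}c_{2}^{2}+\frac{\beta(7\beta-3)}{3}c_{1}c_{3}\\
&\quad +\frac{\beta(10\beta^{2}-10\beta+3)}{12}\bigl(4c_{1}^{2}c_{2}-c_{1}^{4}\bigr)+\frac{\beta(38\beta^{3}-30\beta^{2}+7\beta)}{36}\,c_{1}^{4}.
\end{align*}

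For the estimate, the essential observation is the regrouping of the cubic and quartic terms as $4c_{1}^{2}c_{2}-c_{1}^{4}$: after it, every coefficient above, namely $\beta$, $\beta(2\beta-1)$, $\beta(7\beta-3)$, $\beta(10\beta^{2}-10\beta+3)$, $\beta(38\beta^{2}-30\beta+7)$, is non-negative for $1/2\le\beta\le1$, so nothing is lost in sign when applying the triangle inequality. Then I would use $|c_{4}|\le2$, $|c_{2}|\le2$, $|c_{1}c_{3}|\le 2|c_{1}|$, the classical bound $|4c_{1}^{2}c_{2}-c_{1}^{4}|\le 16$ (write $4c_{1}^{2}c_{2}-c_{1}^{4}=2c_{1}^{2}(2c_{2}-c_{1}^{2})+c_{1}^{4}$, apply $|2c_{2}-c_{1}^{2}|\le 4-|c_{1}|^{2}$, and maximise $8|c_{1}|^{2}-|c_{1}|^{4}$ over $|c_{1}|^{2}\in[0,4]$), and the refined estimate $|c_{1}|\le 2/\sqrt{1+\beta}$ for the surviving $c_{1}$-dependent terms; collecting everything yields the asserted bound on $|a_{5}|$.

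The main obstacle is the middle stage. Carrying the chain of substitutions to the formula for $4a_{5}$ without arithmetic slips is already delicate, but the genuinely non-mechanical point is to split the $c_{1}^{2}c_{2}$ and $c_{1}^{4}$ contributions correctly: stopped at the naive form the quartic coefficient is $\beta(38\beta^{3}-60\beta^{2}+37\beta-9)/36$, which changes sign on $(1/2,1)$ and would wreck a clean application of the triangle inequality; it is precisely the combination $4c_{1}^{2}c_{2}-c_{1}^{4}$, with its sharp constant $16$, that repairs this, and spotting it is the only step that is not routine.
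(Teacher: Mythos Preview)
Your proposal is correct and follows essentially the same route as the paper: the paper derives exactly the same closed formula for $4a_{5}$ (after the substitution $l_{2}=(1+\beta)c_{1}^{2}-c_{2}$), performs the same regrouping into $4c_{1}^{2}c_{2}-c_{1}^{4}$, and then applies $|c_{i}|\le 2$, the refined bound $|c_{1}|\le 2/\sqrt{1+\beta}$, and $|4c_{1}^{2}c_{2}-c_{1}^{4}|\le 16$ to obtain the general estimate, of which the corollary is the specialization $\beta=1/2$. Your identification of the regrouping as the one non-routine step is exactly the point the paper's argument hinges on.
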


 \end {document}